\numberwithin{equation}{section}
\newcommand{\N}{\mathcal{N}}
\newcommand{\JJ}{\mathcal{J}}
\DeclarePairedDelimiter{\norm}{\lVert}{\rVert}
\DeclarePairedDelimiter{\jumpop}{\llbracket}{\rrbracket}
\DeclareSymbolFont{matha}{OML}{txmi}{m}{it}
\DeclareMathSymbol{\varv}{\mathbf}{matha}{118}
\theoremstyle{definition}
\newtheorem{Theorem}{Theorem}[section]
\newtheorem{Definition}{Definition}[section]
\newtheorem{Lemma}{Lemma}[section]
\newtheorem{Remark}{Remark}[section]
\newcommand{\restc}{\mathbin{\vrule height 1.6ex depth 0pt width
0.13ex\vrule height 0.13ex depth 0pt width 1.3ex}}
\newcommand\restr[2]{{% we make the whole thing an ordinary symbol
  \left.\kern-\nulldelimiterspace % automatically resize the bar with \right
  #1 % the function
  \vphantom{\big|} % pretend it's a little taller at normal size
  \right|_{#2} % this is the delimiter
  }}
\NewDocumentCommand{\dgal}{sO{}m}{%
  \IfBooleanTF{#1}
    {\dgalext{#3}}
    {\dgalx[#2]{#3}}%
}
\NewDocumentCommand{\dgalext}{m}{%
  \sbox0{%
    \mathsurround=0pt % just for safety
    $\left\{\vphantom{#1}\right.\kern-\nulldelimiterspace$%
  }%
  \sbox2{\{}%
  \ifdim\ht0=\ht2
    \{\kern-.625\wd2 \{#1\}\kern-.625\wd2 \}%
  \else
    \left\{\kern-.7\wd0\left\{#1\right\}\kern-.7\wd0\right\}%
  \fi
}
\NewDocumentCommand{\dgalx}{om}{%
  \sbox0{\mathsurround=0pt$#1\{$}%
  \sbox2{\{}%
  \ifdim\ht0=\ht2
    \{\kern-.625\wd2 \{#2\}\kern-.625\wd2 \}%
  \else
    \mathopen{#1\{\kern-.7\wd0 #1\{}
    #2
    \mathclose{#1\}\kern-.7\wd0 #1\}}
  \fi
}
\newcommand{\R}{{\mathbb{R}}}
\newcommand{\ph}{\varphi}
\newcommand{\PP}{\mathrm{P}}
\numberwithin{equation}{section}
\newcommand*\titleheader[1]{\gdef\@titleheader{#1}}
  \let\st@red@title\@title
  \def\@title{%
    \bgroup\normalfont\large\centering\@titleheader\par\egroup
    \vskip1.0em\st@red@title}
\title{\bf A class of Discontinuous Galerkin methods for  nonlinear variational problems}
\author[1,2]{G.\ Grekas}
\author[3]{K.\ Koumatos}
\author[1,3,4]{C.\ Makridakis}
\author[5]{A.\ Vikelis}
\affil[1]{\footnotesize Institute of Applied \& Computational Mathematics, Foundation for Research \& Technology-Hellas.}
\affil[2]{\footnotesize Aerospace Engineering and Mechanics, University of Minnesota, Minneapolis, USA.}
\affil[3]{Department of Mathematics, University of Sussex.}
\affil[4]{\footnotesize Department of Mathematics and Applied Mathematics, University of Crete.}
\affil[5]{\footnotesize Faculty of Mathematics, University of Vienna.}
\date{}
\begin{document}

\maketitle
% \date{\today}

% \author{
%  G.\ Grekas$^{1,2}$, Konstantinos Koumatos$^3$, Charalambos Makridakis$^{1,2,3}$, Andreas Vikelis }

\begin{abstract}

% In the context of Discontinuous Galerkin approach, we study approximations of nonlinear variational problems associated with convex energies. We propose element-wise designed discrete methods which allow totally discontinuous discrete minimizers. Using $\Gamma$-convergence arguments we show that the discrete minimizers converge to the unique minimiser of the continuous problem as the mesh parameter tends to zero, under the additional contribution of appropriately defined penalty terms at the level of the discrere energies. We finally substantiate the feasibility of our methods by numerical examples.

In the context of Discontinuous Galerkin methods, we study approximations of nonlinear variational problems.  We propose element-wise nonconforming finite element methods to discretize the continuous minimisation problem. Using $\Gamma$-convergence arguments we show that for convex energies the discrete minimisers converge to a minimiser of the continuous problem as the mesh parameter tends to zero, under the additional contribution of appropriately defined penalty terms at the level of the discrete energies. In addition, we provide error analysis in the case where the solution of the continuous minimization problem is   smooth enough. Under appropriate assumptions we show    local existence, uniqueness and corresponding optimal order error estimates for the discrete Euler-Lagrange equations of our scheme.  
We finally substantiate the feasibility of our methods by numerical examples. 

\end{abstract}

%%%%%%%%%%%%%%%%%%%%%%%%%%%%%%%%%%%%%%%%%%%%%%%%%%%%%%%%%%%%%%%%%%%%%%%
%%%%%%%%%%%%%%%%%%%%%%%%%%%%%%%%%%%%%%%%%%%%%%%%%%%%%%%%%%%%%%%%%%%%%%%
%%%%%%%%%%%%%%%%%%%%%%%%%%%%%%%%%%%%%%%%%%%%%%%%%%%%%%%%%%%%%%%%%%%%%%%

\section{Introduction}
In this paper we propose and analyse  Discontiuous Galerkin (DG) methods for nonlinear variational problems of a rather general form. 
We formulate  Discontiuous Galerkin methods in a systematic and consistent way and we analyse the convergence properties of the discretisations 
by appropriately restricting the energy class. 
Indeed, we consider the problem of minimising the total  energy
\begin{equation}\label{em1}
\begin{aligned}
\mathcal {E} [u] = \int_{\Omega} W(\nabla u( x))  dx - \int_{\Omega} f (x)\,  u(x)  dx  ,
% \\
%\text{where } $\mbu \in H^2(\Omega)^2$ \quad \text{ and } \quad \mbu = \mbg \text{ on } \partial \Omega
\end{aligned}
\end{equation}
with $u $ in appropriate Sobolev spaces incorporating boundary conditions.
$W$ is the strain energy function and $f$ a given external field. Local minima of this problem satisfy appropriate Euler-Lagrange equations when certain smoothness requirements are met which are nonlinear.  Despite the progress in Discontinuous Galerkin methods, appropriate formulations of the  nonlinear principal part of equations of the form \eqref{em1} are, to the best of our knowledge,  quite limited. %, and based on the involvement of nonlocal discrete gradients.  
This paper is devoted to the systematic derivation of such formulations and their mathematical analysis. 

Problems in the calculus of variations seeking to minimise functionals of the form 
\eqref{em1} are important and arise in several different applications. Nevertheless,  many computational challenges remain largely open. The analytical issues are quite subtle and the numerical analysis, even at the scheme design level, requires new approaches. 

Our focus in the present paper is the design of methods that are consistent with structural properties. We theoretically verify the convergence of the methods by considering two alternative approaches. First, without smoothness assumptions on the minimisers,  we show that the $\Gamma$-convergence framework is applicable, considering as a working hypothesis that $W$ is convex. Subsequently, we provide error analysis in the case where the solution of the continuous minimisation problem is assumed to be  smooth enough.  Finally we provide test examples showing that the theoretical predictions are indeed experimentally confirmed.  \\

\noindent
\emph{Our Contributions.\/}
 We start in  Section \ref {DG_design}  by systematically defining the discrete method.
 Our approach recovers the standard interior penalty method in the case 
 of linear elliptic problems. Its final form is given at the energy 
 level, see \eqref{en_dg_1}, and includes terms involving energies of elementwise gradients and explicit jump terms, as is typical with discontinuous Galerkin methods. A penalty term is included to ensure coercivity and therefore discrete stability.  In Section \ref {DG_design}  we present other available DG methods for the approximation of \eqref{em1}.
 Notably, these methods either are defined implicitly through lifting operators, or are defined on the first variations and lack a parent approximate energy. Our method fills this gap in the literature, since it is designed elementwise;   thus its structure follows the typical paradigm of DG methods and in addition it is defined at the energy level approximating \eqref{em1}. 
 
 Section \ref{Convergence} is devoted to the analysis of the method using the tool of $\Gamma-$conver\-gence in the present DG setting. We show that the discrete energies $\Gamma-$con\-verge to \eqref{em1} as the mesh parameter $h$ tends to zero. In the proofs we assume that the energy $W$ is convex and satisfies certain growth conditions, \eqref{eq:growth}. 
 Then we show the convergence of discrete  minimisers to the solution of the original problem provided that the penalty terms are appropriately chosen. Towards this goal,   equi-coercivity, the $\liminf$ and the
$\limsup$ inequalities are derived  for the  discrete energy functional. As in other works, \cite{buffa2009compact, di2010discrete}, 
related to $\Gamma$-convergence analysis for DG methods, the analysis is challenging due to the fact 
that discrete functions are totally discontinuous while the space of continuous minimisers is a 
subspace of $H^1\, .$  See \cite{bartels2017bilayer, BNNtogkas:2021, grekas2022approximations} for recent $\Gamma$-convergence results using conforming or non-conforming finite elements approximating nonlinear fourth order problems. 
Although we follow, \cite{buffa2009compact, di2010discrete}, in reconstructing discrete gradients, in  key technical compactness and lower semicontinuity arguments, our analysis 
 introduces new ideas to tackle the problem at hand.  As a consequence, we show that the element-wise terms introduced in our formulation, indeed contribute to the consistency and stability of the method, and furthermore, the present analysis   reveals  the subtle form of the penalty terms which are required for convergence.  

 Section 4 is devoted to error analysis in the case where the solution of the continuous minimization problem is assumed to be smooth enough. We show that fixed point arguments initially introduced for the dynamic problem and then applied to the stationary Euler-Lagrange equations, \cite {makr_1993, ortner2007discontinuous},
 are applicable to our scheme. Such arguments yield local existence and uniqueness and corresponding optimal order error estimates for the discrete Euler-Lagrange equations.  An interesting new feature of our analysis is that we are able to complete the proofs by  assuming only natural G\aa rding-type inequalities for the stationary problem, thereby extending the analysis of  \cite {ortner2007discontinuous} in which the analysis of the elliptic problem was based  on strong  coercivity assumptions, see Section 4 for details.

We conclude with numerical experiments in Section~\ref{computations}, showing that the proposed method can lead to reliable predictions of the problem at hand and is in agreement with the theoretical results. 

\def\<{{\langle }}
\def\>{{\rangle }}

\newcommand{\J}[1]{{  [\! [#1]\! ]  }}   

\newcommand{\Av}[1]{{  \{\!\!\{#1\}\!\!\}  }}

\section{Design of the DG method}
\label{DG_design}

To motivate the design of the methods we consider first 
a smooth stored energy function $W$ and assume that $f=0.$ 
% We consider a continuum model in nonlinear elasticity: 
%a   Cauchy-Born stored energy function %is defined through
%\begin{equation*}
%W (F ) =W_{CB} (F ), % =  \, \sum _{\eta \in R}\  \phi _\eta \, (\F\, \eta ),
%\end{equation*}
%where the the precise form of $W_{CB}$  originates typically from atomistic models.  The energy is then defined through (here we assume that $f=0$),
We consider the minimisation problem on an appropriate Banach space $X$ (typically a Sobolev space) containing functions 
$u \colon \R^d\supseteq \Omega  \rightarrow \R^N$
\begin{equation}
\begin{aligned}
  & \text{find a   minimiser    in $ {X} $ of : }\\
 &\mathcal{E} (u) = \int _\Omega \,  W (\nabla u  ) \, dx \, . 
\end{aligned}
\end{equation}
Temporarily, to avoid problems with boundary conditions, we assume that  elements of $ {X} $  are of the form $Fx +v (x)$ where $v\in V,$ a subspace of $X$ with periodic boundary conditions, $F \in R^{N \times d}$ and $\Omega = [0, 1] ^d$.  
If such a minimiser exists, then employing the summation convention, 
\begin{equation*}
\begin{aligned}
   \< D{\mathcal{E} } (y), v\>  
   =      \int_\Omega  S_{i\alpha} (\nabla y(x))   \,  \partial_\alpha  v^i  (x) \, dx =   0,  \qquad   v\in V \, ,
\end{aligned}
\end{equation*}
 where the Piola-Kirchhoff stress  tensor is defined through
\begin{equation*}
\begin{aligned}
&S_{i \alpha} (\zeta)  :=   \frac{\partial W (\zeta ) } {\partial  {\zeta_{i\alpha}}} \, .
\end{aligned}
\end{equation*}
We will use the notation 
\begin{equation*}
\begin{aligned}
&S = \partial _\zeta \, W\, .
\end{aligned}
\end{equation*}
We observe now that 
\emph{a key structural property }
of the above problem is that \emph{affine maps of the form $Fx + b$ are critical points of the energy functional.} Indeed, 
the energy $ \mathcal{E}   $ satisfies 
\begin{equation*}
 \< D\mathcal{E}   (y_F), v\>=  0, \quad y_F (x) = Fx\, , \end{equation*}
for all    $v\in $ as  the  Gauss-Green Theorem implies that
  \begin{equation*} \begin{split}
 \< D\mathcal{E}   (y_F), v\> =   
%=\< DE_\{\Om_a, \eta }^a\{y_F\} + D E_{\Om_*, \eta }^{a,cb}\{y_F\} + DE_{\GG, \eta }^{}\{y\} (y_F), v\>=
\partial _\zeta \, W (  F ) \cdot \int _\Omega  \nabla v dx&  = S  ( F )  \cdot \int _\Omega  \nabla v dx\\
& = S_{i\alpha} ( F ) \int _{\partial \Omega}   \,  n_\alpha  v^i  dx = 0\, 
 \end{split}\end{equation*}
 due to periodicity in $X.$
We would like our numerical methods to satisfy a discrete analogue of this fundamental property.  It is 
easy to see that for standard $C^0$ finite elements the above argument goes through and  $y_F$ is a 
critical point of the discrete functional.  However, we would like to consider the case of 
discontinuous discrete spaces. 
Among other reasons, standard finite elements result in a method which might be ``too rigid" in certain applications due to the continuity requirement.  
The discrete version of the above property was called \emph{patch test consistency} in the early literature of finite elements. 
The next observation is crucial when considering discontinuous elements. 
Assume that $\Omega $ is split in two pieces $\Omega _1 $ and $ \Omega _2 ,$   their interface is denoted by $\Gamma$ and the test function $v$ has a possible
 discontinuity at $\Gamma .$
Then %See BNNtogkas:2021
\begin{equation*} \begin{split}
 \< D\mathcal{E}   (y_F), v\> =&   
%=\< DE_\{\Om_a, \eta }^a\{y_F\} + D E_{\Om_*, \eta }^{a,cb}\{y_F\} + DE_{\GG, \eta }^{}\{y\} (y_F), v\>=
\partial _\zeta \, W ( F ) \cdot \int _{\Omega _1 } \nabla v dx + \partial _\zeta \, W ( F ) \cdot \int _{\Omega _2}  \nabla v dx  \\
 & = S  ( F )  \cdot \left [ \int _{\Omega _1 } \nabla v dx +  \int _{\Omega _2} \nabla v dx\right ] \\
& = S_{i\alpha} ( F ) \left [ \ \int _{\partial \Omega_1}   \,  n_{\Omega_1, \, \alpha} v^i  dS + \int _{\partial \Omega_2}   \,  n_{\Omega_1, \, \alpha}  v^i  dS \right ]   \\
& = S_{i\alpha} ( F ) \left [ \ \int _{\partial \Omega }   \,  n_\alpha  v^i  dS + \int _{\Gamma}   \, \Big [ n_{\Omega_1, \, \alpha}  v^i  + n_{\Omega_2, \, \alpha}  v^i  \Big ] dS \right ]\\
& = S_{i\alpha} ( F )  \int _{\Gamma}   \, \Big [ n_{\Omega_1, \, \alpha}  v^i  + n_{\Omega_2, \, \alpha}  v^i  \Big ] dS \\
&= S  ( F )  \cdot  \int _{\Gamma}\  \J {v\otimes n} \, dS.
 \end{split}\end{equation*}
It is clear that the discontinuity of the function $v$ induces a violation of the key property $  \< D\mathcal{E}   (y_F), v\>=0\, .$
On the other hand, this calculation motivates a new definition, see \cite{makridakis2014atomistic}, of the ``discrete" functional which we call discontinuous interface coupling on  $\Gamma : $ 
    \begin{equation*} 
    \begin{split}
  \mathcal{E}_{dg}   (y ) = &\int _{\Omega _1  } \, W( \nabla y ) dx + \int _{\Omega _2  } \, W( \nabla y ) dx   \\  
%=\< DE_\{\Om_a, \eta }^a\{y_F\} + D E_{\Om_*, \eta }^{a,cb}\{y_F\} + DE_{\GG, \eta }^{}\{y\} (y_F), v\>=
%\partial _\zeta \, W (\F ) \cdot \int _{\Omega _1 } \nabla v dx + \partial _\zeta \, W (\F ) \cdot \int _{\Omega _2}  \nabla v dx  \\
 &\quad  -  \int _{\Gamma}\ S  (\Av{ \nabla y}  )  \cdot \J {y \otimes n} \, dS\, .
\end{split}
 \end{equation*}
The new energy is consistent. Indeed, it is a simple matter to check that   %\grek{not clear notation}
 \begin{equation*} 
 \int _{\Gamma} D \big [   S (\Av{\nabla w } ) {\cdot}  \J {w  \otimes n}  \, \big ]\,  v  =
  \int _{\Gamma}   {\Bigl(}
  \partial _\zeta S (\Av{\nabla w  } )\, \J {w\otimes n  }  {\Bigl) \cdot}
   \Av{\nabla v }   
+  S(\Av{\nabla w  } ) {\cdot} \J {v\otimes n} ,
 \end{equation*}
which  reduces to  $  {S(F  )\cdot \J {v \otimes n } }$  when $w= y_F.$ Repeating then the arguments above we conclude that $  \< D\mathcal{E} _{dg}  (y_F), v\>=0\, .$
 
\subsection{New DG finite element energy minimisation methods. } Next, motivated by the above discussion we shall define the discrete energy functional on appropriate 
discontinuous Galerkin spaces. First we introduce the necessary notation, before the DG method is defined. The section concludes with some remarks on its relationship to 
other known methods for energy minimisation as well as  to classical DG methods for linear problems.

 \subsubsection{Finite element spaces and discrete gradients} 
 For simplicity, let us assume 
 that $\Omega$ is a polytope domain, and consider a triangulation $T_h$ of $\Omega$ of mesh size $h>0$. Next we will require the partitions of the domain
to be shape regular \cite{brenner2007mathematical}, i.e., there exists $c > 0$ such that
$$\rho_K \geq 
\frac{h_K}{c}
 \text{ for all } K \in  T_h,
$$
where $\rho_K$ is the diameter of the largest ball inscribed in $K$. To simplify matters, we assume that no hanging nodes are present. 
The methods and proofs can be extended to allow   hanging nodes under appropriate assumptions.

 We seek discontinuous Galerkin approximations to \eqref{eq:energy}. To this end, we introduce the space of piecewise polynomial functions
\begin{equation}\label{eq:DGspace}
V^q_h(\Omega):=\left\{v\in L^\infty(\Omega)\,:\,\restr{v}{K}  \in \mathbb{P}_q(K), K \in T_h \right\},
\end{equation}
where $\mathbb{P}_q(K)$ denotes the space of polynomial functions on $K$ of degree $q\in \mathbb{N}$ which is henceforth fixed. Moreover, we adopt the following standard notation:
the \textit{trace} of 
functions in $V^q_h(\Omega)$ belongs to the space
\begin{align}
 T(E_h) :=  \Pi_{e\in E_h} \mathbb{P}_q(e),
\end{align}
where $E_h$ is the set of mesh edges.
The \textit{average} and \textit{jump} operators over
$T(E_h)$ are defined by:
\begin{equation}
 \begin{aligned}
\dgal{ \cdot } \colon & T(E_h) \mapsto L^p(E_h) \\
  &\dgal{ w } := \left\{
        \begin{array}{ll}
             \frac{1}{2} (\restr{w}{K_{e^+} }  + \restr{w}{K_{e^-} }),
             & \quad \text{for } e \in E_h^i \\
             w, & \quad \text{for } e \in E_h^b,
        \end{array}
    \right.
 \end{aligned}
\label{average_operator}
\end{equation}
\begin{equation}
 \begin{aligned}
 \jumpop{ \cdot }
 \colon& T(E_h) \mapsto L^p(E_h) \\
  &\jumpop{ v }	 := \restr{v}{K_{e^+} }  - \restr{v}{K_{e^-} },
             & \quad \text{for } e \in E_h^i .
  % \left\{
  %       \begin{array}{ll}
  %            \restr{v}{K_{e^+} }  - \restr{v}{K_{e^-} },
  %            & \quad \text{for } e \in E_h^i \\
  %            v, & \quad \text{for } e \in E_h^b.
  %       \end{array}
  %   \right.
 \end{aligned}
\label{jump_operator}
\end{equation}
We often apply the jump operator on dyadic products $v\otimes n$, for $v \colon \Omega \rightarrow \R^N$, $n\in \R^d$ and we write
\begin{equation}
 \begin{aligned}
  &\jumpop{ v\otimes n_e }	 := \restr{v\otimes n_{e^+}}{K_{e^+} }  + \restr{v\otimes n_{e^-}}{K_{e^-} },
             & \quad \text{for } e \in E_h^i  
    %          \left\{
    %     \begin{array}{ll}
    %          \restr{v\otimes n_{e^+}}{K_{e^+} }  + \restr{v\otimes n_{e^-}}{K_{e^-} },
    %          & \quad \text{for } e \in E_h^i \\
    %          v, & \quad \text{for } e \in E_h^b.
    %     \end{array}
    % \right.
 \end{aligned}
\label{jump_operator_tensor}
\end{equation}
where $K_{e^+}$, $K_{e^-}$ are the elements that share the internal edge $e$ with corresponding outward pointing normals
$n_{e^+}, n_{e^-}$, and $E_h^i$, $E_h^b$ denote respectively the set of internal and boundary edges.

The space $V^q_h(\Omega)$ is equipped with the norm
\[
\|v\|^p_{W^{1,p}(\Omega,T_h)} = \|v\|^p_{L^p(\Omega)} + |v|^p_{W^{1,p}(\Omega,T_h)},
\]
where the seminorm $|\cdot|_{W^{1,p}(\Omega,T_h)}$ is defined by
\begin{equation}\label{eq:seminorn}
|v|^p_{W^{1,p}(\Omega,T_h)} := \sum_{K\in T_h} \int_K |\nabla v|^p +\sum_{e\in E_h^i} \frac{1}{h^{p-1}}\int_{e} |\jumpop{v}|^p.
\end{equation}

We note that in the above expression $\nabla v$ denotes the gradient, in the classical sense, of the polynomial function $v$ over the set $K$. As a function in $SBV(\Omega)$, i.e. in the class of special  functions of bounded variation \cite{ambrosio2000functions},
$v\in V^q_h(\Omega)$, admits a distributional gradient which is a Radon measure without a Cantor part and decomposes as
\begin{align}
Dv = \nabla v\mathcal{L}^d\restc\Omega + \jumpop{v}\otimes n_{e} \mathcal{H}^{d-1}\restc E_h
\label{distrib_gradient}
\end{align}
where $\nabla v$ denotes the approximate gradient. To avoid confusion, we denote the above approximate gradient by $\nabla_h v$, i.e. we write $\nabla_h: V^k_h \to V^{k-1}_h$ for the operator defined by
\[
\restr{\left(\nabla_h v\right)}{K} = \nabla (\restr{v}{K}).
\]
In particular, it holds that
\[
\int_{\Omega} W(\nabla_h u_h) = \sum_{K\in T_h} W(\nabla u_h).
\]
A crucial role in the analysis is played by the lifting operator defined below.

 \begin{Definition}
\label{def:lift_operator}
The (global) lifting operator $R_h : T(E_h) \to V^{q-1}_h(\Omega)$ is defined by its action on elements of $V^{q-1}_h(\Omega)$ as
\begin{align}
 \int_\Omega R_h(\varphi) : w_h \,dx =\sum_{e\in E^i_h} \int_e \dgal{ w_h } : 
\jumpop{ \varphi \otimes n_e } \, ds, \quad  \forall w_h \in V^{q-1}_h(\Omega).
\label{eq:lift_operator}
\end{align}
\end{Definition}
Through the lifting operator we may also define the discrete gradient $G_h$ for $u_h\in V^q_h$ as
\begin{align}
G_h(u_h) = \nabla_h u_h - R_h(u_h ).
\label{discrete_gradient}
\end{align}
In general, the lifting operator maps into polynomials of degree $l \ge0 $, \cite[Section 2.3]{di2010discrete}. For convenience we choose $l=k-1$ although this is not essential.

\subsubsection{Discrete energy functionals}
 Motivated by the  discussion in the beginning of this section,  one  is led to define the discrete energy over totally  discontinuous finite element spaces through
\begin{equation}\label{en_dg_1}
\begin{aligned}
\mathcal{E}_{dg} [u_h] &= \sum_{K \in T_h}\int_{K}  W(\nabla u_h(x)) \\ 
 & -\sum_{e \in E^i_h}\int_{e}\ S  (\Av{ \nabla u_h}  )  \cdot \J {u_h \otimes n_e}  ds
 + \text{Pen}\, ( {u_h} ),
 \end{aligned}
\end{equation}
where as is typical with discontinuous Galerkin formulations $\text{Pen}\, ( {u_h} )$ is an appropriate penalty term, i.e., a functional which penalises the jumps $ \J {u_h} $ and it will be specified in the sequel.  

Consider now $\PP: L^2(\Omega) \to V^{q-1}_h$ the standard (local) $L^2$ projection so that for $V \in L^2(\Omega)$
\[
\int_\Omega \left(\PP V\right) : w = \int_{\Omega} V :  w,\quad\forall\,w\in V^{q-1}_h.
\]
In particular, since $\nabla_h u_h$ is in the finite-dimensional space, it holds that $DW(\nabla_h u_h) \in L^2(\Omega)$. 
We can consider a slight variation of the energy \eqref{en_dg_1}, including the projection operator on the interface terms, 
\begin{equation}\label{en_dg_2}
\begin{aligned}
\mathcal{E}_{h} [u_h] &= \sum_{K \in T_h}\int_{K}  W(\nabla u_h(x)) \\ 
 &-\sum_{e \in E^i_h}\int_{e}\  \Av{ \PP\, S  (\nabla u_h  )}  \cdot \J {u_h \otimes n_e}  ds 
+ \text{Pen}\, ( {u_h} ).
 \end{aligned}
\end{equation}
The energy \eqref {en_dg_2} is still consistent in the sense discussed in the beginning of 
Section~\ref{DG_design}  since $\Av{ \PP\, S  ( F  )} =  S  (\Av{ F}  ) = S ( F\, )$ given that $F $ is constant.    For piecewise linear elements ($q=1$)  there holds $\PP\, S  (\nabla u_h  )= S  (\nabla u_h  )\, .$ In addition, for high-order elements, notice that $\PP v$  can be computed elementwise due to the discontinuity of the functions of $ V^{q-1}_h$ across elements. Thus the increase in the computational costs
of \eqref{en_dg_2} compared to \eqref{en_dg_1} is minimal.

\subsubsection{Relationship to known discrete functionals} The following functional can be defined on $ V^{q}_h:$
\begin{equation}\label{en_dg_lo}
\begin{aligned}
\mathcal{E}_{lo} [u_h] &= \sum_{K \in T_h}\int_{K}  W(\nabla u_h(x)  - R_h(u_h ))  
% &-\sum_{e \in E^{int}}\int_{e}\  \Av{ \PP\, S  (\nabla u_h  )}  \cdot \J {u_h}  ds \, . 
+ \text{Pen}\, ( {u_h} )\\
&= \sum_{K \in T_h}\int_{K}  W(G_h(u_h))  
% &-\sum_{e \in E^{int}}\int_{e}\  \Av{ \PP\, S  (\nabla u_h  )}  \cdot \J {u_h}  ds \, . 
+ \text{Pen}\, ( {u_h} ).
 \end{aligned}
\end{equation}
This consists a natural choice, given that $G_h(u_h) = \nabla u_h(x)  - R_h(u_h )$ is considered as the discrete gradient.  
This method was introduced by Ten Eyck and Lew (2006) \cite{ten2006discontinuous} and further analysed in  
\cite{ortner2007discontinuous}, \cite{buffa2009compact}. 
Although lifting operators are a typical analysis tool for DG methods, in the present context,  the evaluation of this functional requires the
 computation of $R_h(u_h )$ (and thus of  $G_h(u_h)$) which appears implicitly  in $W.$
Computationally this reduces to the solution of local problems only, and it is not expensive.
Early contributions on the relevance of DG methods for nonlinear variational problems include
\cite{gobb_prohl_1999}. In this work the discrete functional involved only jump terms on the interface and thus for quadratic potentials the discrete energy is not consistent with  
typical DG formulations. 
It is interesting that functionals such as   \eqref{en_dg_1}, \eqref{en_dg_2} including typical %jump and averages of 
consistent discontinuous Galerkin terms in explicit form and   being at the same time  appropriate for nonlinear energy minimisation problems were not available up to now.

\subsubsection{Euler-Lagrange equations}  The first variation  of \eqref{en_dg_1} is 
\begin{equation}\label{D_en_dg_0}
\begin{aligned}
\< D\mathcal{E}_{dg} [u_h] , v\> &= \sum_{K \in T_h}\int_{K}  DW(\nabla u_h(x)) \nabla v_h(x)\\ 
 & -\sum_{e \in E^i_h}\int_{e}\ S  (\Av{ \nabla u_h}  )  \cdot \J {v_h \otimes n_e}  ds
 \\
 &-\sum_{e \in E^i_h}\int_{e}\ 
\Bigl (   \partial _\zeta S (\Av{\nabla u_h  } )\, \J {u_h  \otimes n_e }  {\Bigl) \cdot}
   \Av{\nabla v_h }   
   ds \\
&   + \< D \text{Pen}\, ( {u_h} ), v_h \>,
 \end{aligned}
\end{equation}
where $\partial _\zeta S (\Av{\nabla u_h  } )\, \J {u_h  \otimes n_e }  {\Bigl) \cdot}
   \Av{\nabla v_h } = $ $(\partial _\zeta S (\Av{\nabla u_h  } )_{ij, kl} v_{h_{k,l}} u_{h_i} n_{e_j}$.
A similar relationship holds for \eqref{en_dg_2}. Notice that a similar form was considered in \cite{ortner2007discontinuous}
 which was designed directly to approximate the Euler-Lagrange equations arising in nonlinear elasticity in the DG framework. 
In this work, compared to \eqref{D_en_dg_0} the term involving $  \partial _\zeta S$ was not present, but it was not clear if the resulting 
form was indeed a first variation of a discrete functional, and thus the association to an energy minimisation problem was missing. 
In the case of the standard potential $ W(F) = \frac 1 2 |F|^2$ corresponding to the Laplace equation, 
 \eqref {D_en_dg_0} reduces to the well known  interior penalty method, see e.g., \cite{Arnold_DG, brenner2007mathematical},
\begin{equation}\label{D_en_dg_1}
\begin{aligned}
\< D\mathcal{E}_{dg} [u_h] , v\> &= \sum_{K \in T_h}\int_{K}   \nabla u_h(x) \nabla v_h(x)\\ 
 &-\sum_{e \in E^i_h}\int_{e}\  \Av{ \nabla u_h}     \cdot \J {v_h \otimes n_e}  
  + \J {u_h  \otimes n_e}   \cdot \Av{\nabla v_h } ds \\
&   + \< D \text{Pen}\, ( {u_h} ), v_h \> .
 \end{aligned}
\end{equation}

\section{Convergence of Discrete  Minimizers}\label{Convergence}
For $\Omega\subset \R^d$ a bounded, polytope domain, consider the minimisation problem
\begin{equation}\label{eq:energy}
\mathcal{E}(u) = \int_\Omega W(\nabla u(x))\,dx,\quad u:\Omega\to \R^N,\,\,u = u_0,\mbox{ on }\partial\Omega.
\end{equation}
We assume throughout that $W:\R^{N\times d}\to\R$ is of class $C^1$  and satisfies
\begin{equation}\label{eq:growth}
-1 + |\xi|^p \lesssim W(\xi) \lesssim 1 + |\xi|^p, \quad p>1.
\end{equation}
Hence, we naturally pose the problem of minimising \eqref{eq:energy} over the 
set of admissible functions
\[
\mathbb{A}(\Omega) = \left\{u \in W^{1,p}(\Omega)^N\,:\, u = u_0\,\,\mathcal{H}^{d-1}\mbox{ a.e. on $\partial\Omega$}\right\}.
\label{eq:cont_fun_space}
\]
Given \eqref{eq:growth}, it is known that \eqref{eq:energy} admits minimisers of class $W^{1,p}_{u_0}(\Omega)$ as long as $W$ is quasiconvex.

%\grek{.... moved text in section 2.1.1. In the previous version the following text comes after the definition of the lifting operator.}

Employing the aforementioned ideas for the discretization of energy functionals in the DG setting, see eq.~(\ref{en_dg_2}),  and applying Dirichlet boundary conditions through Nitsche's 
approach, for $u_h\in V^q_h$  our discrete energy functional has the form
\begin{equation}\label{eq:energy_discrete}
\begin{aligned}
\mathcal{E}_h(u_h) & = \sum_{K\in T_h} \int_K W(\nabla u_h) dx  - \sum_{e\in E^i_h}\int_e \dgal{{\PP} DW(\nabla u_h)}: \jumpop{u_h\otimes n_e}\,ds \\
&\qquad + \alpha \, {\rm Pen}(u_h),
\end{aligned}
\end{equation}
where $\alpha$ will be chosen large enough and the penalty ${\rm Pen}(u_h)$ is defined as 
\begin{equation}\label{eq:penalty}
\begin{aligned}
{\rm Pen}(u_h) & := \left(1 + |u_h|^{p}_{W^{1,p}(\Omega,T_h)} \right)^{\frac{p-1}{p}} \left(\sum_{e\in E_h} \frac{1}{h_e^{p-1}}\int_e |\jumpop{u_h}|^p \right)^{\frac1p},
\end{aligned}
\end{equation}
where for the boundary faces we use the notation
\begin{align}
\jumpop{u_h} =  u_h^- -u_0, \text{ for } e \in E_h^b = E_h \cap \partial \Omega,
\end{align}
and $u_0$ are the boundary conditions encoded in the continuous function space $\mathbb{A}(\Omega)$ of eq.~(\ref{eq:cont_fun_space}).
The above penalty term seems complicated but it is an appropriate $L^p$ modification of the typical $L^2$ penalty
\[
\sum_{e\in E^i_h} \frac{1}{h_e}\int_e |\jumpop{u_h}|^2.
\]
 As it will be evident from the proofs, by assuming that $W \in C^2$ and 
 by imposing further restrictions on the growth of its derivatives, one may also choose 
 the penalty term as
\begin{equation}\label{eq:penalty2}
\begin{aligned}
{\rm Pen}(u_h) & := \left(1 + |u_h|^{p-2}_{W^{1,p}(\Omega,T_h)} \right) \left(\sum_{e\in E_h} \frac{1}{h_e^{p-1}}\int_e |\jumpop{u_h}|^p \right)^{\frac2p}
%& \qquad + \sum_{e\in E^i_h} \frac{1}{h^{p-1}}\int_e |\jumpop{u_h}|^p.
\end{aligned}
\end{equation}
which reduces precisely to the standard $L^2$ penalty term for $p=2$.
We note that the above penalty terms are required to absorb terms in the proofs of  Lemma \ref{lemma:liminf} and Lemma \ref{lemma:limsup} which appear due to the nonlinearity of $W$ and cannot be 
handled by the typical $L^2$ penalty term.

Recall that $DW(\nabla_h u_h) \in L^2(\Omega)$. Also, as $R_h(u_h) \in V^{q-1}_h$, we find that
\[
\begin{aligned}
\int_{\Omega} R_h(u_h) : DW(\nabla_h u_h) & = \int_{\Omega} R_h(u_h) : \PP DW(\nabla_h u_h) dx\\
& =  \sum_{e\in E^i_h}\int_e \dgal{{\PP} DW(\nabla_h u_h)}: \jumpop{u_h\otimes n_e}\,ds\\
& =  \sum_{e\in E^i_h}\int_e \dgal{{\PP} DW(\nabla u_h)}: \jumpop{u_h\otimes n_e}\,ds.
\end{aligned}
\]
Hence, we can re-write \eqref{eq:energy_discrete} in the compact form
\begin{equation}\label{eq:energy_discrete_compact}
\mathcal{E}_h(u_h) = \int_\Omega \left[W(\nabla_h u_h) - R_h(u_h) : DW(\nabla_h u_h)\right]d x + \alpha\, {\rm Pen}(u_h).
\end{equation}
%In the sequel, we require $W$ to be convex. Then, noting that $R_h(u_h) = \nabla_h u_h - G_h(u_h)$, a seemingly complicated yet convenient form of re-writing \eqref{eq:energy_discrete_compact} is the following:
%\begin{equation}\label{eq:energy_discrete_relative}
%\begin{aligned}
%\mathcal{E}_h(u_h) & = \int_\Omega W(G_h(u_h)) \\
%&\quad + \int_\Omega W(\nabla_h u_h) - W(G_h(u_h)) - DW(G_h(u_h)) : R_h(u_h) \\
%&\qquad + \int_\Omega \left(DW(G_h(u_h) - DW(\nabla_h u_h))\right) : R_h(u_h) +  \alpha\, {\rm Pen}(u_h).
%\end{aligned}
%\end{equation}

\subsection{Preliminary results.}

% \subsubsection{Properties of the Lifting Operator and the Discrete Gradient}

Previously we have defined the lifting operator $R_h$ in eq~(\ref{eq:lift_operator}).
In particular, for $u_h\in V^q_h(\Omega)$ we have that
\[
 \int_\Omega R_h(u_h) : w_h \,dx =\sum_{e\in E^i_h} \int_e \dgal{ w_h } : 
\jumpop{ u_h \otimes n_e } \, ds.
\]
Moreover, we note the following lemma, \cite{buffa2009compact},  providing an estimate of $\|R_h(u_h)\|_{L^p}$ in terms of the $W^{1,p}(\Omega,T_h)$ seminorm.
 \begin{Lemma}[Bound on global lifting operator] 
  \label{prop:Rh_bound}
For all $u_h \in V^q_h(\Omega)$ it holds that
  \begin{align}
   \int_\Omega |R_h(u_h)|^p  \leq 
  C_R \sum_{e \in E_h^i}h_e^{1-p} \int_e |\jumpop{u_h}|^p,
  \label{eq:R_bound}
  \end{align}
  where the constant $C_R$ is independent of $h$.
\end{Lemma}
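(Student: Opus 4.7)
The plan is duality inside the finite-dimensional polynomial space $V^{q-1}_h(\Omega)$. Write $r:=R_h(u_h)\in V^{q-1}_h(\Omega)$; I will bound $\|r\|_{L^p(\Omega)}$ by testing the defining identity \eqref{eq:lift_operator} against a carefully chosen $w_h\in V^{q-1}_h(\Omega)$ and then reconstructing the $L^p$-norm of $r$ by finite-dimensional duality.

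The first step is an edgewise H\"older inequality. For any $w_h\in V^{q-1}_h(\Omega)$, applying H\"older on each edge to the right-hand side of \eqref{eq:lift_operator} and then a discrete H\"older across the family $E^i_h$, inserting the scaling factors $h_e^{1/p'}$ and $h_e^{-1/p'}$ to balance the two sides, yields
\[
\Bigl|\int_\Omega r : w_h\,dx\Bigr|
\leq \Bigl(\sum_{e\in E^i_h} h_e^{1-p}\int_e |\jumpop{u_h}|^p\Bigr)^{1/p}
     \Bigl(\sum_{e\in E^i_h} h_e\int_e |\dgal{w_h}|^{p'}\Bigr)^{1/p'},
\]
using that $(p-1)p'/p=1$. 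Next, the standard discrete trace inequality $h_e\int_e|\dgal{w_h}|^{p'}\lesssim \int_{K^+_e\cup K^-_e}|w_h|^{p'}$, valid because $w_h$ is piecewise polynomial of fixed degree on a shape-regular mesh, combined with the usual finite-overlap count on the resulting sum, controls the test-function factor by $C\|w_h\|_{L^{p'}(\Omega)}$.

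It remains to recover $\|r\|_{L^p(\Omega)}$ from this duality bound. The canonical candidate $|r|^{p-2}r$ is not polynomial when $p\neq 2$, so I would instead take $w^\star_h$ equal to the elementwise $L^2$-projection of $|r|^{p-2}r$ onto $\mathbb{P}_{q-1}(K)$. Because $r|_K\in\mathbb{P}_{q-1}(K)$, $L^2$-orthogonality gives $\int_K r:w^\star_h=\int_K|r|^p$, hence $\int_\Omega r:w^\star_h=\|r\|^p_{L^p(\Omega)}$; meanwhile $L^{p'}$-stability of the local $L^2$-projection on shape-regular meshes (a consequence of scaling to the reference element and equivalence of norms on $\mathbb{P}_{q-1}$) yields $\|w^\star_h\|_{L^{p'}(\Omega)}\leq C\|r\|^{p-1}_{L^p(\Omega)}$. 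Inserting into the estimate above and dividing through by $\|r\|^{p-1}_{L^p(\Omega)}$ produces \eqref{eq:R_bound} with $C_R$ absorbing the constants from the discrete trace inequality, the overlap count, and the local projection stability.

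The only mildly delicate ingredient is the $L^{p'}$-stability of the local $L^2$-projection when $p\neq 2$; this is classical on shape-regular meshes, but it is the one place where the geometry of $T_h$ enters in a nontrivial way. All remaining pieces (the edgewise H\"older, the discrete trace inequality, and the finite-overlap covering) are routine scaling arguments on the reference element.
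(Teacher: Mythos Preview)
The paper does not supply its own proof of this lemma; it simply quotes the result from \cite{buffa2009compact}. So there is no in-paper argument to compare against directly.

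Your duality argument is correct. The edgewise H\"older with the $h_e^{\pm 1/p'}$ weights, followed by the discrete trace inequality and the finite-overlap count, is the right way to control the right-hand side of the defining identity by $\|w_h\|_{L^{p'}(\Omega)}$. The clever step is your choice of $w^\star_h$: projecting $|r|^{p-2}r$ elementwise onto $\mathbb{P}_{q-1}(K)$ and then using that $r|_K\in\mathbb{P}_{q-1}(K)$ to recover $\int_K r:w^\star_h=\int_K|r|^p$ exactly is a neat way around the non-polynomial nature of the canonical dualiser. The $L^{p'}$-stability of the local $L^2$-projection is indeed the only ingredient that requires a moment's thought, but your scaling-plus-norm-equivalence justification is the standard one and is fine on shape-regular meshes.

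For context, the proof in \cite{buffa2009compact} (and the closely related treatment in Di Pietro--Ern) proceeds differently: one decomposes $R_h=\sum_{e\in E^i_h} r_e$ into \emph{local} liftings $r_e$ supported only on the two elements sharing $e$, bounds each $\|r_e\|_{L^p}$ by $h_e^{(1-p)/p}\|\jumpop{u_h}\|_{L^p(e)}$ via an inverse/trace argument, and then sums using the bounded overlap of the supports. That route avoids the projection-stability step entirely and gives local control of the lifting, which is sometimes useful elsewhere. Your global duality argument is shorter and perfectly adequate for the purpose here, but it does not immediately yield the local decomposition.
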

So far, we have defined $\nabla_h$ as the approximate gradient of the 
distributional gradient, eq.~(\ref{distrib_gradient}). Now the relation of the discrete 
gradient $G_h(u_h)$, eq.~(\ref{discrete_gradient}), to the distributional 
gradient of an SBV function is revealed by the following: for $u_h\in V^q_h$, 
$\varphi\in C^1_c(\Omega)$ and $\Omega = K_1\cup K_2$, we 
compute
\begin{align*}
\langle Du_h,\varphi\rangle & = - \int_\Omega u_h \cdot {\rm div}\,\varphi\,dx \\
& =  \int_{K_1} \nabla u_h : \varphi   + \int_{K_1} \nabla u_h : \varphi   - \int_e \varphi : \jumpop{u_h \otimes n_e} \\
& = \int_\Omega  \varphi : \nabla u_h  - \int_e  \varphi : \jumpop{u_h \otimes n_e} \\
& = \langle \nabla u_h \mathcal{L}^d\restc\Omega +  \jumpop{u_h\otimes n_{e}} \mathcal{H}^{d-1}\restc e , \varphi\rangle.
\end{align*}
That is, $G_h(u_h)$ acts on $\varphi \in V^{q-1}_h(\Omega)$ in the same way that the Radon measure $Du_h$ acts on $\varphi\in C^1_c(\Omega)$ and $G_h(u_h)$ can be interpreted as a discrete version of the distributional gradient of the $SBV$ function $u_h$. Hence, the following lemma comes as no surprise, where  eq.~(\ref{eq:R_bound}) is employed for the proof, \cite[Proposition 2.1]{di2010discrete}.
 \begin{Lemma}
  \label{lemma:discrete_grad}
There exists a constant $C>0$ such that for all $u_h \in V^q_h(\Omega)$ it holds that
\[
\|G_h(u_h) \|_{L^p(\Omega)} \leq C |u_h|_{W^{1,p}(\Omega,T_h)}.
\]
 Moreover, whenever $u_h \rightarrow u$ in 
$L^p(\Omega)$ and $|u_h|_{W^{1,p}(\Omega, T_h)}$ is uniformly bounded with respect to  $h$, then 
  
  \begin{equation}
\begin{aligned}
\lim_{h \rightarrow 0 }\int_\Omega G_h(u_h) : \ph =
 -\int_\Omega u \cdot {\rm div}\,\ph, \quad 
 \forall  \ph \in  C_c^\infty(\Omega).
\end{aligned}
\label{eq:discrete_gradient_distr_convergence}
\end{equation}
In particular, if $u \in W^{1,p}(\Omega)$,  it holds that
\[
G_h(u_h) \rightharpoonup \nabla u\mbox{ in }L^p(\Omega).
\]
\end{Lemma}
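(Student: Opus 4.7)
The plan is to dispatch the three claims sequentially, reducing each to ingredients already in place. For the seminorm bound we would decompose $G_h(u_h)=\nabla_h u_h - R_h(u_h)$ and use the triangle inequality in $L^p$: on the one hand, $\|\nabla_h u_h\|_{L^p}^p=\sum_{K\in T_h}\int_K|\nabla u_h|^p$ is exactly the volumetric contribution to the seminorm in \eqref{eq:seminorn}; on the other, Lemma~\ref{prop:Rh_bound} gives $\|R_h(u_h)\|_{L^p}^p\leq C_R\sum_{e\in E_h^i}h_e^{1-p}\int_e|\jumpop{u_h}|^p$, which is bounded by the jump contribution. Summing yields the first claim.

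For the distributional identity, fix $\varphi\in C_c^\infty(\Omega;\R^{N\times d})$ and split $\int_\Omega G_h(u_h):\varphi=\int_\Omega\nabla_h u_h:\varphi - \int_\Omega R_h(u_h):\varphi$. Elementwise integration by parts---boundary edges contributing nothing since $\varphi$ has compact support---yields
\begin{equation*}
\int_\Omega\nabla_h u_h:\varphi = -\int_\Omega u_h\cdot{\rm div}\,\varphi + \sum_{e\in E_h^i}\int_e \dgal{\varphi}:\jumpop{u_h\otimes n_e}\,ds,
\end{equation*}
where we used that $\varphi$ is continuous and thus coincides with its own average on each interior face. Since $R_h(u_h)\in V^{q-1}_h$, we may replace $\varphi$ by $\PP\varphi$ in the $L^2$ pairing against $R_h(u_h)$ and then apply Definition~\ref{def:lift_operator} to write $\int_\Omega R_h(u_h):\varphi = \sum_{e\in E_h^i}\int_e \dgal{\PP\varphi}:\jumpop{u_h\otimes n_e}\,ds$. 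Subtracting,
\begin{equation*}
\int_\Omega G_h(u_h):\varphi = -\int_\Omega u_h\cdot {\rm div}\,\varphi + \sum_{e\in E_h^i}\int_e \dgal{\varphi-\PP\varphi}:\jumpop{u_h\otimes n_e}\,ds.
\end{equation*}
The first term tends to $-\int_\Omega u\cdot {\rm div}\,\varphi$ by the $L^p$ convergence of $u_h$ to $u$.

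The hard part is the residual edge sum, which calls for a commutator-type estimate between $\varphi$ and its local $L^2$ projection. H\"older's inequality with $p'=p/(p-1)$ gives
\begin{equation*}
\Bigl|\sum_{e\in E_h^i}\int_e \dgal{\varphi-\PP\varphi}:\jumpop{u_h\otimes n_e}\Bigr|\leq \Bigl(\sum_e h_e^{1-p}\int_e|\jumpop{u_h}|^p\Bigr)^{1/p}\Bigl(\sum_e h_e\int_e|\dgal{\varphi-\PP\varphi}|^{p'}\Bigr)^{1/p'};
\end{equation*}
the first factor is bounded by $|u_h|_{W^{1,p}(\Omega,T_h)}$, uniform in $h$ by hypothesis. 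For the second, since $\PP$ reproduces constants on each $K$ (as $q\geq 1$), a standard approximation estimate yields $\|\varphi-\PP\varphi\|_{L^\infty(K)}\leq C h_K\|\nabla\varphi\|_\infty$; combined with $|e|\leq Ch_e^{d-1}$ and the quasi-uniformity bound $\sum_e h_e^d\leq C|\Omega|$, the second factor is $O(h)$ and vanishes. This completes \eqref{eq:discrete_gradient_distr_convergence}. For the final claim, the uniform $L^p$ bound on $G_h(u_h)$ already proved extracts a weakly convergent subsequence whose limit must equal $\nabla u$ in view of the distributional identity just derived whenever $u\in W^{1,p}$; uniqueness of the limit then promotes this to weak convergence of the full sequence.
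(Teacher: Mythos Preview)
Your proof is correct and follows precisely the route the paper indicates: the paper does not give an explicit proof of this lemma but only remarks that it ``comes as no surprise'' from the preceding SBV-type integration-by-parts computation together with the lifting bound \eqref{eq:R_bound}. Your argument fills in exactly these details---decomposing $G_h=\nabla_h-R_h$, using Lemma~\ref{prop:Rh_bound} for the first claim, and for the distributional identity carrying out the elementwise integration by parts and controlling the residual edge sum via the approximation error $\varphi-\PP\varphi$---which is the one non-trivial step the paper leaves implicit.
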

Next  Poincar\'{e} type inequalities for DG spaces are presented. Similar results have been shown in \cite{buffa2009compact}.
\begin{Lemma}[Poincar\'{e} inequality for DG spaces.]
\label{thm:Poincare}
Let $u_h \in V_h(\Omega)$ and $\Gamma \subset \partial \Omega$ with $|\Gamma| >0$, then for all $h \le 1$ it holds that
\begin{align}
    \norm{u_h}_{L^p(\Omega)} \lesssim |u_h|_{W^{1,p}(\Omega,T_h)} + \norm{u_h}_{L^p(\Gamma)}.
    \label{eq:Poincare}
\end{align}
\end{Lemma}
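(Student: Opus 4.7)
The plan is to reduce this broken Poincar\'{e}--Friedrichs inequality to the standard conforming one by comparing $u_h$ with a suitable averaging (Oswald-type) interpolant. Specifically, I would introduce an operator $\mathcal{O}_h : V^q_h(\Omega) \to V^q_h(\Omega) \cap W^{1,p}(\Omega)$ defined by averaging the values of $u_h$ at each internal Lagrange node over the elements sharing that node, while preserving the boundary Lagrange degrees of freedom on $\Gamma$. By construction $\mathcal{O}_h u_h$ is conforming and its trace on $\Gamma$ coincides with that of $u_h$.

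Next I would establish the two classical approximation estimates for such an operator,
\begin{equation*}
\|u_h - \mathcal{O}_h u_h\|_{L^p(\Omega)}^p \lesssim \sum_{e \in E_h^i} h_e \int_e |\jumpop{u_h}|^p, \qquad \|\nabla \mathcal{O}_h u_h\|_{L^p(\Omega)}^p \lesssim |u_h|_{W^{1,p}(\Omega,T_h)}^p,
\end{equation*}
which are proved elementwise by comparing the two polynomials $u_h|_K$ and $\mathcal{O}_h u_h|_K$ at their Lagrange nodes and invoking equivalence of norms on the reference element together with a local trace inequality on faces. Because $p>1$ and $h_e \leq h \leq 1$, one has $h_e \leq h_e^{1-p}$, so the first display is controlled by $|u_h|^p_{W^{1,p}(\Omega,T_h)}$ as defined in \eqref{eq:seminorn}.

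To conclude, I combine the triangle inequality with the classical Poincar\'{e}--Friedrichs inequality applied to the conforming function $\mathcal{O}_h u_h \in W^{1,p}(\Omega)$, namely $\|\mathcal{O}_h u_h\|_{L^p(\Omega)} \lesssim \|\nabla \mathcal{O}_h u_h\|_{L^p(\Omega)} + \|\mathcal{O}_h u_h\|_{L^p(\Gamma)}$. Using the two estimates from the previous paragraph together with $\mathcal{O}_h u_h|_\Gamma = u_h|_\Gamma$ then delivers the bound in the stated form. The main technical obstacle is in constructing the averaging operator itself: one must simultaneously preserve the boundary Lagrange values on $\Gamma$ (so as to reproduce $\|u_h\|_{L^p(\Gamma)}$ on the right-hand side) and obtain a globally conforming function, which requires some care at Lagrange nodes lying on $\Gamma$ that are also adjacent to interior faces supporting nontrivial jumps. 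An alternative route is to average symmetrically at all nodes and then pay a scaled trace inequality on the boundary-touching elements to bound $\|\mathcal{O}_h u_h - u_h\|_{L^p(\Gamma)}^p$ by $|u_h|^p_{W^{1,p}(\Omega,T_h)}$; this is the path used in \cite{buffa2009compact}, whose arguments I would adapt to the present $L^p$ setting.
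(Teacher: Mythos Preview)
Your proposal is correct and follows essentially the same route as the paper: introduce a conforming reconstruction $w_h = Q_h u_h \in V_h \cap W^{1,p}(\Omega)$, invoke the approximation estimates of \cite{buffa2009compact} to control $\|u_h - w_h\|_{L^p(\Omega)}$ and $\|\nabla w_h\|_{L^p(\Omega)}$ by $|u_h|_{W^{1,p}(\Omega,T_h)}$, apply the classical Poincar\'e inequality to $w_h$, and conclude by the triangle inequality. The paper goes directly via what you call the ``alternative route'': rather than forcing $\mathcal{O}_h u_h|_\Gamma = u_h|_\Gamma$ (which, as you correctly note, is awkward at Lagrange nodes on $\Gamma$ shared by several elements), it uses the boundary estimate $\|u_h - w_h\|_{L^p(\Gamma)} \lesssim |u_h|_{W^{1,p}(\Omega,T_h)}$ from \cite[Theorem~3.1]{buffa2009compact} and absorbs this into the right-hand side.
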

\begin{proof}
    Let $w_h$ be the approximation of $u_h$ from the continuous finite element space 
    $V_h(\Omega)\cap W^{1,p}(\Omega)$ through the reconstruction operator $Q_h: V_h \rightarrow W^{1,p}(\Omega)$, i.e. $w_h= Q_h u_h$,
    defined in \cite[Section 3.2]{buffa2009compact}. The
    following inequality results from the local estimates given in \cite[Theorem 3.1]{buffa2009compact}:
    \begin{align}
    \norm{u_h -w_h}_{L^p{(\Omega)}} + \norm{u_h -w_h}_{L^p{(\Gamma)}} + \norm{\nabla w_h}_{L^p{(\Omega)}}\lesssim |u_h|_{W^{1,p}(\Omega, T_h)}.
    \label{eq:rec_estimates}
    \end{align}
     Standard Poincar\'{e} inequalities
    for functions  in $W^{1,p}(\Omega)$, \cite{ziemer1989weakly},
     together with  inequality (\ref{eq:rec_estimates}) give the bound
    \begin{align*}
        \norm{w_h}^p_{L^p(\Omega)}
        &\lesssim  \norm{\nabla w_h}_{L^p(\Omega)}^p + \left|\frac{1}{|\Gamma|} \int_\Gamma w_h ds\right|^p 
        \lesssim |u_h|_{W^{1,p}(\Omega, T_h)}^p 
        \\
        &+ \norm{w_h - u_h}^p_{L^p(\Gamma)} + \norm{u_h}^p_{L^p(\Gamma)} 
        \lesssim |u_h|_{W^{1,p}(\Omega, T_h)}^p + \norm{u_h}^p_{L^p(\Gamma)}
    \end{align*}
Then, it is straightforward to deduce inequality~(\ref{eq:Poincare}) again with the help of eq.~(\ref{eq:rec_estimates})
\begin{align*}
     \norm{u_h}^p_{L^p(\Omega)}  \lesssim \norm{u_h - w_h}^p_{L^p(\Omega)}  + \norm{w_h}^p_{L^p(\Omega)}
      \lesssim |u_h|_{W^{1,p}(\Omega, T_h)}^p + \norm{u_h}^p_{L^p(\Gamma)}.
\end{align*}
\end{proof}

\subsection{$\Gamma$-convergence for convex energies}
We may now state our main result:

\begin{Theorem}
\label{thm:main}
Let $W:\R^{N\times d}\to\R$ be convex and of class $C^1$, satisfying the coercivity and growth condition 
\eqref{eq:growth}. Suppose that the penalty parameter $\alpha$ from \eqref{eq:energy_discrete} is large 
enough and $u_h \in V^q_h(\Omega)$ is a sequence of minimisers of $\mathcal{E}_h$, eq.~\eqref{eq:energy_discrete}, i.e.
\[
\mathcal{E}_h(u_h) = \min_{V^q_h(\Omega)} \mathcal{E}_h.
\]
Then there exists $u\in \mathbb{A}(\Omega)$ such that, up to a subsequence,
\[
u_h \to u\mbox{ in }L^p(\Omega)
\]
and
\[
\mathcal{E}(u) = \min_{\mathbb{A}(\Omega)} \mathcal{E}.
\]
\end{Theorem}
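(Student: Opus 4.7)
The plan is to apply the standard $\Gamma$-convergence paradigm: establish equi-coercivity of $\mathcal{E}_h$ so that minimisers converge in $L^p$ to some $u\in\mathbb{A}(\Omega)$, prove a $\liminf$ inequality $\mathcal{E}(u)\le \liminf_h \mathcal{E}_h(u_h)$, and construct a recovery sequence attaining $\mathcal{E}(v)$ for every $v\in\mathbb{A}(\Omega)$. The standard comparison $\mathcal{E}_h(u_h)\le \mathcal{E}_h(v_h)\to \mathcal{E}(v)$ together with the $\liminf$ bound then yields $\mathcal{E}(u) = \min_{\mathbb{A}(\Omega)} \mathcal{E}$. Throughout, the compact form \eqref{eq:energy_discrete_compact} of $\mathcal{E}_h$ and the weak convergence $G_h(u_h)\rightharpoonup \nabla u$ from Lemma~\ref{lemma:discrete_grad} will be the main tools.

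For equi-coercivity, the growth bound \eqref{eq:growth} gives $\int_\Omega W(\nabla_h u_h)\,dx\ge -|\Omega| + c\sum_{K\in T_h}\int_K |\nabla u_h|^p\,dx$, while the cross term in \eqref{eq:energy_discrete_compact} is estimated via H\"older and Lemma~\ref{prop:Rh_bound} as
\[
\left|\int_\Omega R_h(u_h):DW(\nabla_h u_h)\,dx\right|\lesssim \norm{R_h(u_h)}_{L^p}\,\bigl(1+|u_h|^{p-1}_{W^{1,p}(\Omega,T_h)}\bigr) \lesssim \text{Pen}(u_h),
\]
having used $|DW(\xi)|\lesssim 1+|\xi|^{p-1}$ from the $C^2$ growth of $W$. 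For $\alpha$ sufficiently large this yields $\mathcal{E}_h(u_h) \ge -C + c\,|u_h|^p_{W^{1,p}(\Omega,T_h)} + c\,\text{Pen}(u_h)$. The $L^p$ bound then follows from the DG Poincar\'{e} inequality (Lemma~\ref{thm:Poincare}) with $\Gamma=\partial\Omega$, since the boundary contribution in $\text{Pen}(u_h)$ controls $\norm{u_h-u_0}^p_{L^p(\partial\Omega)}\lesssim h^{p-1}$; this simultaneously identifies the trace of the $L^p$-limit $u$ as $u_0$, so $u\in\mathbb{A}(\Omega)$, and Lemma~\ref{lemma:discrete_grad} delivers $G_h(u_h)\rightharpoonup \nabla u$ weakly in $L^p(\Omega)$.

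For the $\liminf$ inequality, convexity of $W$ yields the tangent-plane bound $W(\nabla_h u_h)\ge W(G_h u_h) + DW(G_h u_h):R_h(u_h)$, whence
\[
\mathcal{E}_h(u_h) \ge \int_\Omega W(G_h u_h)\,dx + \int_\Omega R_h(u_h):\bigl[DW(G_h u_h) - DW(\nabla_h u_h)\bigr]\,dx + \alpha\,\text{Pen}(u_h).
\]
The middle integral is bounded in absolute value by the same H\"older/growth argument as in the coercivity step, again yielding a multiple of $\text{Pen}(u_h)$; hence for $\alpha$ large we obtain $\mathcal{E}_h(u_h)\ge \int_\Omega W(G_h u_h)\,dx$. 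Weak lower semicontinuity of the convex integral functional $v\mapsto \int_\Omega W(v)\,dx$ on $L^p$, applied to $G_h(u_h)\rightharpoonup \nabla u$, then gives $\liminf_h \mathcal{E}_h(u_h)\ge \mathcal{E}(u)$.

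For the recovery sequence, a density argument reduces the task to $v\in \mathbb{A}(\Omega)\cap C^\infty(\overline{\Omega})$; take $v_h$ to be a conforming nodal interpolant in $V^q_h\cap W^{1,p}(\Omega)$ matching $u_0$ at boundary nodes. Then $\J{v_h}=0$ on interior edges, so $R_h(v_h)=0$ and $\nabla_h v_h = G_h(v_h)\to \nabla v$ in $L^p$, while the boundary jumps $v_h-u_0$ are $O(h)$ pointwise; hence $\text{Pen}(v_h)\to 0$ and $\mathcal{E}_h(v_h)\to \mathcal{E}(v)$ by continuity of $W$ and dominated convergence. The main delicacy, in my view, is calibrating the exponents of the penalty \eqref{eq:penalty}: the factor $(1+|u_h|^p_{W^{1,p}(\Omega,T_h)})^{(p-1)/p}$ paired with the $p$-th root of the jump energy must simultaneously absorb both the consistency cross term from equi-coercivity and the convexity defect $\int R_h(u_h):[DW(G_h u_h)-DW(\nabla_h u_h)]\,dx$ from the $\liminf$ argument; the H\"older pairing $(p,p')$ combined with Lemma~\ref{prop:Rh_bound} is precisely what forces this particular product form.
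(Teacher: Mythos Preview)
Your proposal follows the same $\Gamma$-convergence scaffolding as the paper (equi-coercivity, $\liminf$, recovery sequence, then the comparison argument), and your compactness and $\liminf$ steps are correct. Your $\liminf$ decomposition is a mild variant of the paper's: you invoke the tangent-plane inequality to write $II_h + III_h \ge \int R_h(u_h):[DW(G_h u_h)-DW(\nabla_h u_h)]$ and then bound this single term by $C\,{\rm Pen}(u_h)$ via the crude estimate $|DW(\xi_1)-DW(\xi_2)|\le |DW(\xi_1)|+|DW(\xi_2)|$, whereas the paper bounds $|II_h|=|\int [W(\nabla_h u_h)-W(G_h u_h)]|$ and $|III_h|$ separately using \eqref{eq:growthqc}. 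Both routes land on the same inequality $II_h+III_h+\alpha\,{\rm Pen}(u_h)\ge 0$ for $\alpha$ large; yours is marginally cleaner and is in spirit the argument the paper reserves (in the Remark after Lemma~\ref{lemma:liminf}) for the alternative penalty, but without needing the extra $D^2W$ growth hypothesis.

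There is, however, a genuine gap in your recovery-sequence step. Reducing by density to $v\in \mathbb{A}(\Omega)\cap C^\infty(\overline\Omega)$ presumes that smooth functions \emph{with trace exactly $u_0$} are $W^{1,p}$-dense in $\mathbb{A}(\Omega)$. If $u_0$ is merely the trace of a $W^{1,p}$ function, this set may be empty, and even if nonempty your ``matching $u_0$ at boundary nodes'' needs $u_0$ to have well-defined nodal values. The paper circumvents this by \emph{not} insisting that the smooth approximant lie in $\mathbb{A}(\Omega)$: it mollifies $v$ to $v_\delta\in C^\infty(\overline\Omega)$ with $\|v-v_\delta\|_{W^{1,p}}\lesssim \delta$ and $|v_\delta|_{W^{2,p}}\lesssim \delta^{-1}$, takes $v_h=I_h^q v_\delta$, and couples $\delta=h^\beta$ with $\beta\in\bigl(\tfrac{p-1}{p},1\bigr)$ so that both the interior-jump contributions and the boundary penalty
\[
\sum_{e\in E_h^b}\frac{1}{h_e^{p-1}}\int_e |v_h-u_0|^p \;\lesssim\; \frac{h^p}{\delta^p}+\frac{\delta^p}{h^{p-1}}
\]
vanish as $h\to 0$. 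This coupled-scale construction is the missing ingredient; once you have it, your conforming-interpolant shortcut (which would make $R_h(v_h)=0$) still works, but only because $I_h^q v_\delta$ of a smooth $v_\delta$ is automatically continuous --- the point is that you must allow $v_\delta|_{\partial\Omega}\neq u_0$ and control the resulting boundary defect via the penalty.
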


The proof of Theorem \ref{thm:main} is a direct consequence of the fact that $\mathcal{E}_h$ $\Gamma$-converges to $\mathcal{E}$ with respect to the strong topology of $L^p(\Omega)$. The remainder of this section is devoted to the proof of the $\Gamma$-convergence result. We start by proving compactness in an appropriate topology for sequences of bounded energy. 
%We note that for any $u\in W^{1,p}(K)$ it holds that
%\[
%\|u\|_{L^p(\partial K)} \lesssim \|u\|^{1-\frac1p}_{L^p(K)}\|u\|^{\frac1p}_{W^{1,p}(K)}.
%\]
%whereas, by standard estimates, and for $u\in W^{l,p}(K) \cap W^{m,q}(K)$ 
%\[
%\|u\|_{W^{l,p}(K)} \lesssim h^{m-l + \frac{d}{p} - \frac{d}{q}}\|v\|_{W^{m,q}}.
%\]
%Combining the two estimates we find the trace inequality
%\begin{equation}
%\label{eq:trace_ineq}
%\|\|_{L^p(\partial K)} \lesssim h^{-\frac1p} \|v\|_{L^p(K)}.
%\end{equation}
%
%We may now show the following compactness result:

\begin{Lemma}
\label{lemma:compactness}
For $u_h \in V^q_h(\Omega)$ it holds that
\[
\|u_h\|^p_{W^{1,p}(\Omega,T_h)} \lesssim 1 + \mathcal{E}_h(u_h).
\]
In particular, if 
\[
\sup_h\mathcal{E}_h(u_h) \leq C
\]
there exists $u\in \mathbb{A}(\Omega)$ such that, up to a subsequence,
\[
u_h \to u\mbox{ in }L^p(\Omega)
\]
and
\[
G_h(u_h) \rightharpoonup \nabla u\mbox{ in }L^p(\Omega).
\]
\end{Lemma}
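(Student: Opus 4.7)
The plan is to derive the norm bound by absorbing the interface coupling term in \eqref{eq:energy_discrete} into the penalty, then apply the Poincar\'{e} inequality of Lemma~\ref{thm:Poincare} to control the $L^p$ part, and finally use the reconstruction operator of \cite{buffa2009compact} together with Rellich compactness to extract a limit $u\in\mathbb{A}(\Omega)$, with $G_h(u_h)\rightharpoonup \nabla u$ following from Lemma~\ref{lemma:discrete_grad}.

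First I would rewrite the energy using the compact form \eqref{eq:energy_discrete_compact}, so that the interface term reads $\int_\Omega R_h(u_h):DW(\nabla_h u_h)\,dx$. With the shorthand $A:=\sum_{K}\int_K|\nabla u_h|^p$, $B_i:=\sum_{e\in E_h^i}h_e^{1-p}\int_e|\jumpop{u_h}|^p$, and $B'$ for the analogue of $B_i$ summed over all edges, the coercivity in \eqref{eq:growth} gives $\int_\Omega W(\nabla_h u_h)\geq A-|\Omega|$, while convexity and the $p$-growth of $W$ yield the standard estimate $|DW(\xi)|\lesssim 1+|\xi|^{p-1}$. H\"older's inequality and Lemma~\ref{prop:Rh_bound} then bound the interface term by $C(1+A)^{(p-1)/p}B_i^{1/p}\leq C\,\mathrm{Pen}(u_h)$. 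Choosing $\alpha$ larger than this constant and rearranging gives
\[
\mathcal{E}_h(u_h)+|\Omega|\;\geq\;A+(1+A+B_i)^{(p-1)/p}(B')^{1/p}.
\]
The key algebraic observation $(1+A+B_i)^{(p-1)/p}(B')^{1/p}\geq B_i^{(p-1)/p}B_i^{1/p}=B_i$ (using $1+A+B_i\geq B_i$ and $B'\geq B_i$) then delivers the seminorm bound $|u_h|^p_{W^{1,p}(\Omega,T_h)}=A+B_i\lesssim 1+\mathcal{E}_h(u_h)$, and likewise $B'\lesssim(1+\mathcal{E}_h(u_h))^p$ by dropping the $A$ term and using $\mathrm{Pen}(u_h)\geq(B')^{1/p}$.

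The full-norm bound follows from Lemma~\ref{thm:Poincare} applied on $\Gamma=\partial\Omega$, combined with the triangle inequality $\|u_h\|_{L^p(\partial\Omega)}\leq\|u_0\|_{L^p(\partial\Omega)}+\|u_h-u_0\|_{L^p(\partial\Omega)}$ and the boundary-jump estimate $\|u_h-u_0\|^p_{L^p(\partial\Omega)}\leq h^{p-1}B_b\leq B'$ for $h\leq 1$. With $|u_h|_{W^{1,p}(\Omega,T_h)}$ uniformly bounded, the reconstruction $w_h=Q_h u_h\in W^{1,p}(\Omega)$ from \cite{buffa2009compact} (already invoked in the proof of Lemma~\ref{thm:Poincare}) satisfies $\|u_h-w_h\|_{L^p(\Omega)}\to 0$ and is uniformly bounded in $W^{1,p}(\Omega)$. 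Rellich--Kondrachov produces, up to a subsequence, $u\in W^{1,p}(\Omega)$ with $w_h\to u$ in $L^p$, hence $u_h\to u$ in $L^p$. The trace of $u$ equals $u_0$ because $\|u_h-u_0\|_{L^p(\partial\Omega)}\to 0$ by $h^{p-1}B_b\to 0$ and $\|u_h-w_h\|_{L^p(\partial\Omega)}\to 0$ via the same local reconstruction estimates, so $u\in\mathbb{A}(\Omega)$. The weak convergence $G_h(u_h)\rightharpoonup\nabla u$ in $L^p(\Omega)$ is then an immediate application of Lemma~\ref{lemma:discrete_grad}.

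The subtle point is the absorption step. The penalty \eqref{eq:penalty} is genuinely nonlinear in the norm it is meant to control, and the exponent $(p-1)/p$ together with the factor $(1+|u_h|^p_{W^{1,p}(\Omega,T_h)})^{(p-1)/p}$ must be simultaneously large enough to dominate the $(p-1)$-growth of $DW$ after H\"older, and sharp enough that $B_i$ itself -- rather than merely $B_i^{1/p}$ -- is recovered from $\mathrm{Pen}(u_h)$. The inequality $(1+A+B_i)^{(p-1)/p}(B')^{1/p}\geq B_i$ is precisely where this calibration becomes visible, and it is the point that distinguishes this penalty from the standard $L^2$-type penalty.
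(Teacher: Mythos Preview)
Your argument is correct and follows essentially the same route as the paper: coercivity of $W$, the bound $|DW(\xi)|\lesssim 1+|\xi|^{p-1}$, H\"older combined with Lemma~\ref{prop:Rh_bound} for the interface term, the algebraic inequality $\mathrm{Pen}(u_h)\geq B_i$, Poincar\'e for the $L^p$ part, and Buffa--Ortner compactness plus Lemma~\ref{lemma:discrete_grad} for the limit identification. The one small difference is that the paper splits the interface term via Young's inequality (absorbing the gradient piece into the coercivity of $W$ and the jump piece into the penalty), whereas you observe directly that the whole interface term is bounded by $C\,\mathrm{Pen}(u_h)$ and absorb it in one shot; this is a minor streamlining and is in fact exactly the estimate the paper itself uses later in the $\liminf$ argument, cf.\ \eqref{eq:III}.
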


\begin{proof}
For $u_h \in V^q_h(\Omega)$
and using either penalty term \eqref{eq:penalty} or \eqref{eq:penalty2} we first estimate the second term in the discrete energy \eqref{eq:energy_discrete_compact} as follows:
\begin{align*}
\int_{\Omega} R_h(u_h) : DW(\nabla_h u_h) & \leq \left(\int_{\Omega} |R_h(u_h)|^p \right)^\frac1p \left(\int_\Omega |DW(\nabla_h u_h)|^{\frac{p}{p-1}}\right)^{\frac{p-1}{p}} \\
& \leq \left(C_R \sum_{e \in E_h^i} h_e^{1-p} \int_e |\jumpop{u_h}|^p\right)^{\frac1p} \left(\int_\Omega C\left(1 + |\nabla_h u_h|^{p-1}\right)^{\frac{p}{p-1}}\right)^{\frac{p-1}{p}}\\
& \leq \left(C_R \sum_{e \in E_h^i} h_e^{1-p} \int_e |\jumpop{u_h}|^p\right)^{\frac1p} \left(  2^{\frac{1}{p-1}} C \int_\Omega 1 + |\nabla_h u_h|^{p}\right)^{\frac{p-1}{p}}
\end{align*}
by Lemma \ref{prop:Rh_bound} and the growth of $W$ which, combined with convexity, implies that
\begin{equation}\label{eq:DW}
|DW(\xi)| \lesssim 1 + |\xi|^{p-1}.
\end{equation}
The above inequality follows from \cite[Proposition 2.32]{dacorogna2007direct}.
Hence, by Young's inequality, we infer that 
\begin{align}\label{eq:compact_1}
\int_{\Omega} R_h(u_h) : DW(\nabla_h u_h) & \leq \frac{1}{p}C_R \delta^{1-p} \sum_{e \in E_h^i} h_e^{1-p} \int_e |\jumpop{u_h}|^p \nonumber \\
&\quad + \delta \frac{p-1}{p}  2^{\frac{1}{p-1}} C \int_\Omega \left(1 + |\nabla_h u_h|^{p}\right).
\end{align}
% Indeed, the above constants appear as for general $\alpha$, $\beta >0$ and $q = p/(p-1)$, it follows that
% \begin{align*}
% \ln(\alpha\beta) 
% &
% = \ln(\alpha \delta^{-\frac1q}\delta^\frac1q \beta)
%  = \ln(\delta^{-\frac1q}\alpha) + \ln(\delta^\frac1q \beta) \\
% & = \frac1p \ln(\delta^{-\frac{p}{q}}\alpha^p) + \frac1q \ln(\delta \beta^q)
% \leq \ln\left(\frac1p \delta^{-\frac{p}{q}}\alpha^p + \frac1q \delta \beta^q\right),
% \end{align*}
% where the inequality follows by the concavity of the logarithm.
The coercivity assumption on $W$ says that
\[
W(\xi) \geq -C_0 + C_1 |\xi|^p.
\]
Thus, choosing $\delta^{-1} = \frac{2}{C_1} \frac{p-1}{p}  2^{\frac{1}{p-1}} C$, by \eqref{eq:compact_1} we find that
\[
\mathcal{E}_h(u_h) \geq \frac{C_1}2 \int_\Omega |\nabla_h u_h|^p - C - C(p,R)\sum_{e \in E_h^i} \frac{1}{h_e^{p-1}} \int_e |\jumpop{u_h}|^p + \alpha\,{\rm Pen}(u_h)
\] 
where
\[
C(p,R) = \frac1p C_R \delta^{1-p} = \frac{1}{p} C_R\left(\frac{2}{C_1}\frac{p-1}{p}  2^{\frac{1}{p-1}} C\right)^{p-1}.
\]
Next, note that since 
\begin{equation}\label{eq:norm_ineq}
 |u_h|^{p}_{W^{1,p}(\Omega,T_h)} \geq  \sum_{e\in E^i_h} \frac{1}{h_e^{p-1}}\int_e |\jumpop{u_h}|^p
\end{equation}
the penalty term in \eqref{eq:penalty} satisfies
\begin{align*}
{\rm Pen}(u_h) & = \left(1 + |u_h|^{p}_{W^{1,p}(\Omega,T_h)} \right)^{\frac{p-1}{p}} \left(\sum_{e\in E_h} \frac{1}{h_e^{p-1}}\int_e |\jumpop{u_h}|^p \right)^{\frac1p}\\
&\geq \left(1 + \sum_{e\in E^i_h} \frac{1}{h_e^{p-1}}\int_e |\jumpop{u_h}|^p \right)^{\frac{p-1}{p}} \left(\sum_{e\in E^i_h} \frac{1}{h_e^{p-1}}\int_e |\jumpop{u_h}|^p \right)^{\frac1p} \\
& \geq \sum_{e\in E^i_h} \frac{1}{h_e^{p-1}}\int_e |\jumpop{u_h}|^p,
\end{align*}
where the last inequality follows from the fact that the conjugate exponents $(p-1)/p$ and $1/p$ add up to $1$. Then, provided that $\alpha > C(p,R)$, we deduce that
\[
\mathcal{E}_h(u_h) \geq \frac{C_1}2 \int_\Omega |\nabla_h u_h|^p - C + (\alpha - C(p,R))\sum_{e\in E_h^i} \frac{1}{h_e^{p-1}} \int_e |\jumpop{u_h}|^p
\] 
and hence that
\[
|u_h|^p_{W^{1,p}(\Omega,T_h)} \lesssim 1 + \mathcal{E}_h(u_h).
\]
Furthermore, if $\mathcal{E}_h(u_h) \leq C$, $||u_h||_{L^p(\partial \Omega)}$ is uniformly bounded which implies from the Poincar\'{e} inequality~(\ref{eq:Poincare}) that $||u_h||_{L^p(\Omega)} \leq C^\prime$. 
% Furthermore, Sobolev-Poincar\'e's inequalities for broken Sobolev spaces \cite[Theorem 4.1]{buffa2009compact} \grek{(Note that they do not include the case
% $p \ge d$ in their proof but the proof looks fine for $p \ge d$, where $p^* = \infty$)}
%  imply that
% \begin{align}
% \norm{u_h}_{L^p(\Omega)}  \lesssim | u_h |_{W^{1,p}(\Omega, T_h)} +  ||u_h||_{L^p(\Gamma)}, 
% \end{align}
% where $\Gamma \subset \partial \Omega$, with $|\Gamma| > 0$, and for the latter inequality see \cite[Lemma 6.1]{di2010discrete}.
% Thus, if $\mathcal{E}_h(u_h) \leq C$, $||u_h||_{L^p(\Gamma)}$ is uniformly bounded which implies that $||u_h||_{L^p(\Omega)} \leq C^\prime$.
From compact embeddings for broken Sobolev spaces, \cite[Theorem 6.1]{di2010discrete}, together with Lemma \ref{lemma:discrete_grad}, we infer Lemma \ref{lemma:compactness}. In particular, we find that
\[
u_h \to u \mbox{ in }L^p(\Omega)\mbox{ and }G_h(u_h) \rightharpoonup \nabla u\mbox{ in }L^p(\Omega),
\]
for some $u \in W^{1,p}(\Omega)$. To prove $u \in \mathbb{A}(\Omega)$ note that
\[
||u - u_0||_{L^p(\partial \Omega)} \le ||u_h - u_0||_{L^p(\partial \Omega)} + ||u - u_h||_{L^p(\partial \Omega)}.
\]
As $h\rightarrow 0$  the first term of the right hand side converges to zero due to the assumption that $\sup_h\mathcal{E}_h(u_h) \leq C$ and
$||u - u_h||_{L^p(\partial \Omega)} \rightarrow 0$  from the compact embedding in \cite[Lemma 8]{buffa2009compact}.
\end{proof}

\begin{Remark}
Regarding the penalty term \eqref{eq:penalty2}, provided that $p>2$ and using \eqref{eq:norm_ineq}, we find that 
\[
|u_h|^{p-2}_{W^{1,p}(\Omega,T_h)} =  \left(|u_h|^{p}_{W^{1,p}(\Omega,T_h)}\right)^{\frac{p-2}{p}} \geq \left(\sum_{e\in E^i_h} \frac{1}{h_e^{p-1}}\int_e |\jumpop{u_h}|^p \right)^{\frac{p-2}{p}}.
\]
Hence, the penalty term in \eqref{eq:penalty2} satisfies
\begin{align*}
{\rm Pen}(u_h) & = \left(1 + |u_h|^{p-2}_{W^{1,p}(\Omega,T_h)} \right) \left(\sum_{e\in E_h} \frac{1}{h_e^{p-1}}\int_e |\jumpop{u_h}|^p \right)^{\frac2p}  \\
&\geq \left(\sum_{e\in E_h} \frac{1}{h_e^{p-1}}\int_e |\jumpop{u_h}|^p \right)^{\frac{p-2}{p}}\left(\sum_{e\in E_h} \frac{1}{h_e^{p-1}}\int_e |\jumpop{u_h}|^p \right)^{\frac2p}\\
&= \sum_{e\in E_h} \frac{1}{h_e^{p-1}}\int_e |\jumpop{u_h}|^p
\end{align*}
due to the conjugacy of the exponents and Lemma \ref{lemma:compactness} also follows in this case.
\end{Remark}

We next prove the $\liminf$-inequality, providing a lower bound for the sequence $\mathcal{E}_h(u_h)$. 

\begin{Lemma}
\label{lemma:liminf}
Suppose that $W$ is convex, $u_h \in V^q_h(\Omega)$ and $u\in \mathbb{A}(\Omega)$ satisfy $u_h \to u$ in $L^p(\Omega)$. Then,
\[
\liminf_{h\to 0} \mathcal{E}_h(u_h) \geq \mathcal{E}(u).
\]
\end{Lemma}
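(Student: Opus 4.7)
If $\liminf_{h\to 0} \mathcal{E}_h(u_h) = +\infty$ the inequality is trivial, so we may assume the liminf is finite and pass to a (not relabelled) subsequence with $\sup_h \mathcal{E}_h(u_h) < \infty$. Lemma \ref{lemma:compactness} then yields uniform boundedness of $|u_h|_{W^{1,p}(\Omega, T_h)}$ and, in combination with the hypothesis $u_h \to u$ in $L^p(\Omega)$, Lemma \ref{lemma:discrete_grad} gives $G_h(u_h) \rightharpoonup \nabla u$ weakly in $L^p(\Omega;\mathbb{R}^{N\times d})$.

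The next step is to re-express the energy in terms of $G_h(u_h)$. Starting from the compact form \eqref{eq:energy_discrete_compact} and using the decomposition $\nabla_h u_h = G_h(u_h) + R_h(u_h)$, a direct rearrangement produces the identity
\[
\mathcal{E}_h(u_h) = \int_\Omega W(G_h(u_h))\,dx - \Delta_h + \alpha\, \mathrm{Pen}(u_h),
\]
where
\[
\Delta_h := \int_\Omega \Big[ W(G_h(u_h)) - W(\nabla_h u_h) + DW(\nabla_h u_h) : R_h(u_h) \Big]\,dx.
\]
The tangent inequality for convex $W$ at $\nabla_h u_h$ gives $\Delta_h \geq 0$, whereas the tangent inequality with the roles of $\nabla_h u_h$ and $G_h(u_h)$ exchanged produces the upper bound
\[
\Delta_h \leq \int_\Omega \big(DW(\nabla_h u_h) - DW(G_h(u_h))\big) : R_h(u_h)\,dx.
\]

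The crux is showing that $\Delta_h$ is absorbed by the penalty. Using the growth of $DW$ inherited from the $p$-growth and convexity of $W$, Lemma \ref{prop:Rh_bound} (bounding $\|R_h(u_h)\|_{L^p}^p \lesssim J_h^p$, with $J_h^p := \sum_e h_e^{1-p}\int_e|\jumpop{u_h}|^p$), and Hölder's inequality, one derives: for $p \geq 2$ the bound $\Delta_h \lesssim (1 + |u_h|^p_{W^{1,p}(\Omega,T_h)})^{(p-2)/p} J_h^2$, via the Fr\'echet-type estimate $|DW(\xi_1) - DW(\xi_2)| \lesssim (1 + |\xi_1|^{p-2} + |\xi_2|^{p-2})|\xi_1 - \xi_2|$; for $1 < p < 2$ the bound $\Delta_h \lesssim J_h^p$, via the H\"older-type estimate $|DW(\xi_1) - DW(\xi_2)| \lesssim |\xi_1 - \xi_2|^{p-1}$. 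Exploiting the elementary inequality $J_h \leq (1 + |u_h|^p_{W^{1,p}(\Omega,T_h)})^{1/p}$, both cases consolidate into
\[
\Delta_h \leq C_\star (1 + |u_h|^p_{W^{1,p}(\Omega,T_h)})^{(p-1)/p} J_h = C_\star\, \mathrm{Pen}(u_h),
\]
for a constant $C_\star$ depending only on $W$, $p$ and the lifting constant.

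Choosing $\alpha > C_\star$ (compatible with the threshold of Lemma \ref{lemma:compactness}) then yields $\mathcal{E}_h(u_h) \geq \int_\Omega W(G_h(u_h))\,dx$. Since $W$ is convex with $p$-growth, the functional $v \mapsto \int_\Omega W(v)\,dx$ is sequentially weakly lower semicontinuous on $L^p(\Omega;\mathbb{R}^{N\times d})$, and applying this to $G_h(u_h) \rightharpoonup \nabla u$ from the first paragraph completes the proof:
\[
\liminf_{h\to 0} \mathcal{E}_h(u_h) \geq \liminf_{h\to 0} \int_\Omega W(G_h(u_h))\,dx \geq \int_\Omega W(\nabla u)\,dx = \mathcal{E}(u).
\]
The principal obstacle is the estimate of $\Delta_h$ in the third paragraph: the specific powers $(p-1)/p$ and $1/p$ in the penalty \eqref{eq:penalty} are tailored precisely so that $\Delta_h$ is dominated by $\alpha\,\mathrm{Pen}(u_h)$ in both growth regimes, and the asymmetry between $p \geq 2$ and $1 < p < 2$ forces one to handle the two cases by distinct Hölder estimates for $DW$; a cruder $L^2$-style jump penalty would not suffice in general.
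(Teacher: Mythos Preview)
Your overall architecture matches the paper's: reduce to bounded energy, invoke compactness to get $G_h(u_h)\rightharpoonup\nabla u$, rewrite $\mathcal{E}_h(u_h)$ as $\int_\Omega W(G_h(u_h))\,dx$ plus a remainder that is dominated by $\alpha\,\mathrm{Pen}(u_h)$, and finish by weak lower semicontinuity. The decomposition via $\Delta_h$ is equivalent to the paper's $II_h+III_h$ (up to a sign), and your observation that $\Delta_h\ge 0$ by the tangent inequality at $\nabla_h u_h$ is correct but ultimately not needed.

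The genuine gap is in your third paragraph. The pointwise estimates you invoke on $DW$,
\[
|DW(\xi_1)-DW(\xi_2)|\lesssim (1+|\xi_1|^{p-2}+|\xi_2|^{p-2})\,|\xi_1-\xi_2|\quad(p\ge 2),
\qquad
|DW(\xi_1)-DW(\xi_2)|\lesssim |\xi_1-\xi_2|^{p-1}\quad(1<p<2),
\]
do \emph{not} follow from convexity, $C^2$ regularity and the $p$-growth \eqref{eq:growth}. Convexity together with $p$-growth yields only $|DW(\xi)|\lesssim 1+|\xi|^{p-1}$, see \eqref{eq:DW}; it gives no control on the modulus of continuity of $DW$. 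The paper makes exactly this point in the Remark following Lemma~\ref{lemma:liminf}: the bound $|D^2W(\xi)|\lesssim 1+|\xi|^{p-2}$ is a genuine additional hypothesis, not a consequence of convexity, and is invoked there only for the alternative penalty \eqref{eq:penalty2}. Your route through the second tangent inequality and the $DW$-difference therefore proves the lemma only under that extra assumption.

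The repair is simple and is what the paper actually does for the penalty \eqref{eq:penalty}: abandon the upper bound via $DW(\nabla_h u_h)-DW(G_h(u_h))$ and instead estimate the two pieces of $\Delta_h$ separately. Using only \eqref{eq:DW} and the consequence
\[
|W(\xi_1)-W(\xi_2)|\lesssim (1+|\xi_1|^{p-1}+|\xi_2|^{p-1})\,|\xi_1-\xi_2|,
\]
which \emph{does} follow from convexity and $p$-growth (cf.\ \eqref{eq:growthqc}), H\"older's inequality together with Lemmas~\ref{prop:Rh_bound} and~\ref{lemma:discrete_grad} gives
\[
\left|\int_\Omega \big(W(G_h(u_h))-W(\nabla_h u_h)\big)\right| + \left|\int_\Omega DW(\nabla_h u_h):R_h(u_h)\right|
\;\lesssim\;(1+|u_h|^p_{W^{1,p}(\Omega,T_h)})^{\frac{p-1}{p}}\,J_h \;=\;\mathrm{Pen}(u_h),
\]
with no case distinction in $p$. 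This replaces your third paragraph and the rest of your argument then goes through verbatim.
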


\begin{proof}
Given $u_h$, $u$ as in the statement, suppose that $\liminf \mathcal{E}_h(u_h) \leq C$ as otherwise there is nothing to prove. By passing to a subsequence, we may assume that the limit 
is attained and that 
\[
\sup_h \mathcal{E}_h(u_h) \leq C.
\]
By Lemma \ref{lemma:compactness} we may also assume that $\|u_h\|_{W^{1,p}(\Omega,T_h)} \leq C$ and that
\begin{equation}\label{eq:liminf_1}
G_h(u_h) \rightharpoonup \nabla u\mbox{ in }L^p(\Omega).
\end{equation}
Note that since $W$ is convex and $G_h(u_h) \rightharpoonup \nabla u$ in $L^p(\Omega)$, it holds that
\begin{equation}\label{eq:liminf_2}
\liminf_{h\to 0}\int_\Omega W(G_h(u_h)) \geq \int_\Omega W(\nabla u) = \mathcal{E}(u).
\end{equation}
We thus add and subtract the term
\[
\int_\Omega W(G_h(u_h))
\]
in the discrete energy \eqref{eq:energy_discrete_compact} to find that
\[
\begin{aligned}
\mathcal{E}_h(u_h) & = \int_\Omega W(G_h(u_h)) + \int_\Omega W(\nabla_h u_h) - W(G_h(u_h)) \\
&\qquad + \int_\Omega R_h(u_h):DW(\nabla_h u_h) +  \alpha\, {\rm Pen}(u_h) \\
&\qquad =: I_h + II_h + III_h +  \alpha\, {\rm Pen}(u_h).
\end{aligned}
\]
By \eqref{eq:liminf_2} it suffices to show that
\begin{equation}\label{eq:sufficient}
\liminf_{h\to 0} \left[ II_h + III_h +  \alpha\, {\rm Pen}(u_h) \right] \geq 0. 
\end{equation}
We begin by estimating $III_h$. By H\"older's inequality we deduce that
\begin{equation*}
| III_h |  \leq  \left(\int_\Omega |R_h(u_h)|^p\right)^{\frac1p} \left(\int_\Omega | DW(\nabla_h u_h)|^{\frac{p}{p-1}}\right)^{\frac{p-1}{p}}.
\end{equation*}
By invoking the growth of $DW$ in \eqref{eq:DW} we infer that 
\begin{equation*}
\begin{aligned}
| III_h | & \leq  C \| R_h(u_h)\|_{L^p(\Omega)} \left(\int_\Omega | 1+ |\nabla_h u_h|^{p-1}|^{\frac{p}{p-1}}\right)^{\frac{p-1}{p}}\\
& \leq C \| R_h(u_h)\|_{L^p(\Omega)} \left(\int_\Omega \left( 1+ |\nabla_h u_h|^{p}\right)\right)^{\frac{p-1}{p}}\\
& = C  \| R_h(u_h)\|_{L^p(\Omega)} \left(1 + \|\nabla_h u_h\|_{L^p(\Omega)}^{p}\right)^{\frac{p-1}{p}}\\
& \leq C\,\left(C_R\right)^{\frac1p}  \left(\sum_{e\in E^i_h} \frac{1}{h_e^{p-1}}\int_e |\jumpop{u_h}|^p\right)^{\frac1p}\left(1 + \|\nabla_h u_h\|_{L^p(\Omega)}^{p}\right)^{\frac{p-1}{p}},
\end{aligned}
\end{equation*}
where the last inequality follows from Lemma \ref{prop:Rh_bound}. Hence, we obtain that
\begin{equation}\label{eq:III}
\begin{aligned}
| III_h | & \leq C\,\left(C_R\right)^{\frac1p}  \left(\sum_{e\in E^i_h} \frac{1}{h_e^{p-1}}\int_e |\jumpop{u_h}|^p\right)^{\frac1p} \left(1 + |u_h |_{W^{1,p}(\Omega,T_h)}^{p}\right)^{\frac{p-1}{p}} \\
& \leq  \,C\,\left(C_R\right)^{\frac1p} {\rm Pen}(u_h).
\end{aligned}
\end{equation}

Next, we estimate the term $II_h$. Note that the convexity and growth of $W$, see 
\cite[Proposition 2.32]{dacorogna2007direct}, imply the existence of a constant $C>0$ such that 
\begin{align}\label{eq:growthqc}
| W(\xi_1) - W(\xi_2) | \leq C \left( 1 + |\xi_1|^{p-1} +|\xi_2|^{p-1} \right) | \xi_1 - \xi_2 |.
\end{align}
Therefore, by H\"older's inequality, the relation $R_h(u_h) = \nabla_h u_h - G_h(u_h) $, and Lemma \ref{prop:Rh_bound} we find that 
\begin{equation*}
\begin{aligned}
| II_h | & \leq C \int_\Omega \left( 1 + |\nabla_h u_h|^{p-1} +|G_h(u_h)|^{p-1} \right) | \nabla_h u_h - G_h(u_h) | \\
& \leq C \left[\int_\Omega \left( 1 + |\nabla_h u_h|^{p-1} +|G_h(u_h)|^{p-1} \right)^{\frac{p}{p-1}} \right]^{\frac{p-1}{p}} \| R_h(u_h) \|_{L^p(\Omega)} \\
& \leq C\, C_R \left[\int_\Omega \left( 1 + |\nabla_h u_h|^{p} +|G_h(u_h)|^{p} \right) \right]^{\frac{p-1}{p}}  \left(\sum_{e\in E^i_h} \frac{1}{h_e^{p-1}}\int_e |\jumpop{u_h}|^p\right)^{\frac1p}.
\end{aligned}
\end{equation*}
By Lemma \ref{lemma:discrete_grad}, we deduce that 
\begin{equation}\label{eq:II}
\begin{aligned}
| II_h |  & \leq C\, C_R  \left( 1 + |u_h|_{W^{1,p}(\Omega,T_h)}^p  \right)^{\frac{p-1}{p}}  \left(\sum_{e\in E^i_h} \frac{1}{h_e^{p-1}}\int_e |\jumpop{u_h}|^p\right)^{\frac1p} \\
&\leq \,\,C\,C_R {\rm Pen}(u_h).
\end{aligned}
\end{equation}
Using \eqref{eq:III} and \eqref{eq:II}, we get that 
\[
II_h + III_h +  \alpha\, {\rm Pen}(u_h) \geq \left(\alpha - 2 C C_R\right) {\rm Pen}(u_h) 
\]
and choosing $\alpha > 2 C C_R $ we infer \eqref{eq:sufficient} which concludes the proof.
\end{proof}
\begin{Remark}
If instead we assume the penalty term \eqref{eq:penalty2}
we can prove Lemma~\ref{lemma:liminf} with the additional assumption that $W \in C^2$ and
\[
|D^2 W(\xi)| \lesssim 1 + |\xi|^{p-2}, \text{ } p\ge 2.
\]
Note this is a genuine assumption and not a consequence of convexity, see Appendix C in \cite{koumatos2020existence} for 
an example.
For the proof re-write the discrete energy as
\[
\begin{aligned}
\mathcal{E}_h(u_h) & \ge \int_\Omega W(G_h(u_h)) \\
&\quad + \int_\Omega W(\nabla_h u_h) - W(G_h(u_h)) - DW(G_h(u_h)) : R_h(u_h) \\
&\qquad + \int_\Omega \left(DW(G_h(u_h) - DW(\nabla_h u_h))\right) : R_h(u_h) +  \alpha\, {\rm Pen}(u_h)\\
&\qquad =: I_h + II_h + III_h +  \alpha\, {\rm Pen}(u_h).
\end{aligned}
\]
As before the convexity of $W$ implies that
\begin{equation}
\liminf_{h\to 0} I_h = \liminf_{h\to 0}\int_\Omega W(G_h(u_h)) \geq \int_\Omega W(\nabla u) = \mathcal{E}(u).
\end{equation}
Moreover, using the convexity of $W$ again, we have that
\[
W(\xi_1) - W(\xi_2) - DW(\xi_2):(\xi_1 - \xi_2) \geq 0,
\]
and recalling that $R_h(u_h) = \nabla_h u_h - G_h(u_h)$, we deduce that
\begin{equation}\label{eq:liminf_3}
II_h = \int_\Omega W(\nabla_h u_h) - W(G_h(u_h)) - DW(G_h(u_h)) : R_h(u_h) \geq 0.
\end{equation}
As for the term $\left(DW(G_h(u_h) - DW(\nabla_h u_h))\right) : R_h(u_h)$, we absorb it into the penalty term. Indeed, by assuming a $(p-2)$-growth on $D^2W$ , we infer that
\[
| DW(\xi_1) - DW(\xi_2) | \leq C \left( 1 + |\xi_1|^{p-2} + |\xi_2|^{p-2} \right)| \xi_1 - \xi_2 | 
\]
and, since $R_h(u_h) = \nabla_h u_h - G_h(u_h)$, that
\begin{align*}
|III_h| & \lesssim \int_\Omega \left(1 + |G_h(u_h)|^{p-2} + |\nabla_h u_h|^{p-2}\right)|R_h(u_h)|^2\\
& \lesssim \left(\int_\Omega \left(1 + |G_h(u_h)|^{p} + |\nabla_h u_h|^{p}\right)\right)^{\frac{p-2}{p}} \left(\int_\Omega |R_h(u_h)|^p\right)^{\frac2p}.
\end{align*}
However, by Lemma \ref{lemma:discrete_grad}, we find that
\[
\int_\Omega \left(1 + |G_h(u_h)|^{p} + |\nabla_h u_h|^{p}\right) \lesssim 1 +  |u_h|^p_{W^{1,p}(\Omega,T_h)}
\]
whereas, by Lemma \ref{prop:Rh_bound}, that
  \begin{align*}
   \int_\Omega |R_h(u_h)|^p  \leq 
  C_R \sum_{e \in E_h^i}h_e^{1-p} \int_e |\jumpop{u_h}|^p.
  \end{align*}
Thus, 
\begin{equation}\label{eq:liminf_4}
|III_h|  \lesssim \left(1 +  |u_h|^{p-2}_{W^{1,p}(\Omega,T_h)}\right)\left( \sum_{e }h_e^{1-p} \int_e |\jumpop{u_h}|^p\right)^{\frac2p} = {\rm Pen}(u_h).
\end{equation}
Combining \eqref{eq:liminf_2}--\eqref{eq:liminf_4}, we deduce that
\begin{align*}
\liminf_{h\to 0} \mathcal{E}_h(u_h) & \geq \mathcal{E}(u) + (\alpha - C){\rm Pen}(u_h) \geq \mathcal{E}(u),
\end{align*}
for $\alpha>0$ large enough.
\end{Remark}

The remaining ingredient in the proof of Theorem \ref{thm:main} is providing a recovery sequence which is given in the following lemma.

\begin{Lemma}
\label{lemma:limsup}
For every $u \in \mathbb{A}(\Omega)$ if  $q \ge 1$ and $h<1$ there exists $u_h \in V^q_h(\Omega)$  such that $u_h \to u$ in $L^p(\Omega)$ and 
\[
\limsup_{h\to 0} \mathcal{E}_h(u_h) \leq \mathcal{E}(u).
\]
\end{Lemma}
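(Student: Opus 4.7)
The plan is to construct the recovery sequence from \emph{continuous} piecewise polynomials of degree $q$ and then run a diagonal argument in a smoothing parameter. The point is that whenever the candidate $u_h$ lies in the conforming subspace $V^q_h(\Omega) \cap C^0(\bar\Omega)$, every interior jump $\jumpop{u_h}$ vanishes, so $R_h(u_h)=0$, the interface term in \eqref{eq:energy_discrete_compact} disappears, and the penalty ${\rm Pen}(u_h)$ reduces to its boundary contribution. The discrete energy then simplifies to
\begin{equation*}
\mathcal{E}_h(u_h) = \sum_{K\in T_h}\int_K W(\nabla u_h) + \alpha\,{\rm Pen}(u_h),
\end{equation*}
and it remains only to approximate $u$ in $W^{1,p}(\Omega)$ by continuous polynomials whose boundary trace closely matches $u_0$.

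First I would treat the smooth case: assume $u \in C^\infty(\bar\Omega) \cap \mathbb{A}(\Omega)$ and let $I_h u \in V^q_h(\Omega) \cap C^0(\bar\Omega)$ be the Lagrange interpolant of degree $q \ge 1$. Standard interpolation theory gives $\|I_h u - u\|_{W^{1,p}(\Omega)} \to 0$, hence $\nabla_h I_h u \to \nabla u$ strongly in $L^p(\Omega)$; combined with the growth estimate \eqref{eq:growthqc} and H\"older's inequality, this yields $\sum_K \int_K W(\nabla I_h u) \to \int_\Omega W(\nabla u)$. For the boundary penalty, since $I_h u$ reproduces $u_0$ at the boundary Lagrange nodes, the classical estimate $\|I_h u - u_0\|_{L^\infty(e)} \lesssim h^{q+1}$ on each boundary edge yields
\begin{equation*}
\sum_{e \in E_h^b} \frac{1}{h_e^{p-1}}\int_e |I_h u - u_0|^p\,ds \lesssim h^{qp+1} \to 0,
\end{equation*}
which, together with the uniform $W^{1,p}$-bound on $I_h u$, forces ${\rm Pen}(I_h u) \to 0$. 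Hence $\mathcal{E}_h(I_h u) \to \mathcal{E}(u)$.

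For general $u \in \mathbb{A}(\Omega)$ I would invoke a trace-preserving density argument: pick $u^\eps \in C^\infty(\bar\Omega) \cap \mathbb{A}(\Omega)$ with $u^\eps \to u$ in $W^{1,p}(\Omega)$; by the $p$-growth of $W$ this also gives $\mathcal{E}(u^\eps) \to \mathcal{E}(u)$. Applying the smooth case to each $u^\eps$ produces $\lim_{h\to 0}\mathcal{E}_h(I_h u^\eps) = \mathcal{E}(u^\eps)$ with $I_h u^\eps \to u^\eps$ in $L^p(\Omega)$. A standard diagonal extraction $\eps = \eps(h) \to 0$ then delivers $u_h := I_h u^{\eps(h)}$ satisfying $u_h \to u$ in $L^p(\Omega)$ and $\limsup_{h\to 0}\mathcal{E}_h(u_h) \le \mathcal{E}(u)$, as required.

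The hard part is the trace-preserving density in the last step: to make the boundary penalty asymptotically negligible one genuinely needs smooth approximants that \emph{exactly} agree with $u_0$ on $\partial\Omega$, for otherwise the $h_e^{1-p}$ weight would amplify any $O(\eps)$ trace mismatch into something divergent in $h$. For $u_0$ admitting a smooth extension this is classical, via local flattening of $\partial\Omega$ and tangential mollification inside a boundary layer preserving the trace; under weaker assumptions on $u_0$ a refined construction, first approximating $u_0$ itself by piecewise polynomials on $\partial\Omega$, would be required. Every other step is essentially routine once this is in place, because continuous interpolants annihilate all DG-specific terms.
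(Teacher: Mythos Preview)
Your idea of taking the recovery sequence from the \emph{conforming} subspace $V^q_h\cap C^0(\bar\Omega)$ is a genuinely different and more economical route than the paper's: once interior jumps vanish, the lifting, the interface term and the interior part of the penalty all disappear, and the problem reduces to a conforming $\limsup$ construction. For smooth $u_0$ (so that $C^\infty(\bar\Omega)\cap\mathbb{A}(\Omega)$ is dense in $\mathbb{A}(\Omega)$) your argument is complete and shorter than the paper's, which keeps the full DG decomposition $I_h+II_h+III_h+\alpha\,{\rm Pen}$ and estimates each piece.

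The gap is exactly where you locate it, but your diagnosis is too pessimistic and the suggested fix does not work. If $u_0$ is merely the trace of a $W^{1,p}$ function, the set $C^\infty(\bar\Omega)\cap\mathbb{A}(\Omega)$ is empty (the trace of a smooth function is smooth), so no trace-preserving smooth approximants exist; ``first approximating $u_0$ by piecewise polynomials on $\partial\Omega$'' does not help because the penalty is measured against the fixed datum $u_0$, not against an approximation of it. What the paper does instead is drop the trace constraint entirely: it mollifies $u$ to $u_\delta\in C^\infty(\bar\Omega)$ with $\|u-u_\delta\|_{W^{1,p}}\lesssim\delta$ and $|u_\delta|_{W^{2,p}}\lesssim\delta^{-1}$, interpolates, and then couples $\delta=h^\beta$. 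The boundary mismatch contributes
\[
\sum_{e\in E_h^b}\frac{1}{h_e^{p-1}}\int_e|u_\delta-u_0|^p
\ \lesssim\ \frac{1}{h^{p-1}}\|u_\delta-u\|_{L^p(\partial\Omega)}^p
\ \lesssim\ \frac{\delta^p}{h^{p-1}} = h^{\beta p-(p-1)},
\]
which tends to zero provided $\beta>\tfrac{p-1}{p}$, while the interpolation error contributes $h^p/\delta^p=h^{p(1-\beta)}\to0$ provided $\beta<1$. So an $O(\delta)$ trace mismatch is harmless once $\delta$ is tied to $h$ at the right rate; your claim that one ``genuinely needs'' exact agreement is precisely what the paper circumvents. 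You can graft this coupling onto your conforming construction and keep the rest of your simplifications.
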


\begin{proof}
Here we show that for $u \in \mathbb{A}(\Omega)$ a recovery sequence $(u_h) \subset V_h^q$ is obtained where 
\begin{align}
|| u - u_h ||^p_{W^{1,p}(\Omega,T_h)} +  \sum_{e\in E^b_h}\frac{1}{h_e^{p-1}}\int_e |u_h - u_{0}|^p \rightarrow 0, 
\text{ as } h \rightarrow 0
\label{limsup_uh_conv}
\end{align}
and 
\begin{align}
\lim_{h\rightarrow 0} \mathcal{E}_h(u_h) = \mathcal{E}(u).
\label{eq:limsup_Eh_conv}
\end{align}
\\
First note that $C^\infty(\bar{\Omega})$ is dense in $W^{1,p}(\Omega)$ and one can create a sequence 
$(u_\delta) \subset C^\infty(\bar{\Omega})$ with the properties
\begin{align}
||u - u_\delta||_{W^{1,p}(\Omega)} \lesssim \delta \quad \text{and} \quad |u_\delta|_{W^{2,p}(\Omega)} \lesssim \frac{1}{\delta} |u|_{W^{1,p}(\Omega)}.
\label{eq:mol_ineq}
\end{align}
Choosing the sequence $u_{h, \delta} = I_h^q u_\delta$, where $I_h^q: W^{2,p}(\Omega) \rightarrow V_h^q(\Omega)$ is the standard nodal interpolation operation,
we recall the error estimates for all $q \ge 1$ 
\begin{align}
&|u_\delta - u_{h,\delta}|_{W^{i,p}(K)} \lesssim h_K^{2-i} |u_\delta |_{W^{2,p}(K)}, \text{ } i=0, 1, 2,
\label{eq:error_in_K} \\ 
&|u_\delta - u_{h,\delta}|_{L^p(e)} \lesssim h_K^{2-\frac{1}{p} }|u_\delta|_{W^{2,p}(K)}.
\label{eq:error_in_e} 
\end{align}
In the following, assuming that $h <1$, the above error estimates are combined with eq.~(\ref{eq:mol_ineq}) to get that
\begin{align}
&\sum_{K \in T_h} |u_\delta - u_{h, \delta}|_{W^{1,p}(K)}^p \lesssim h^p |u_\delta |_{W^{2,p}(\Omega)}^p \lesssim \frac{h^p}{\delta^p} |u|_{W^{1,p}(\Omega)}^p, 
\label{eq:bulk_terms}
\\
&\sum_{e \in E_h} \frac{1}{h_e^{p-1}} || \jumpop{u_\delta - u_{h, \delta}}||_{L^p(e)}^p \lesssim h^p \sum_{e \in E_h}|u_\delta|^p_{W^{2,p}(K_e)} 
\lesssim \frac{h^p}{\delta^p} |u|_{W^{1,p}(\Omega)}^p,
\label{eq:jump_terms}
\end{align}
here $K_e$ denotes the set of the elements containing $e \in E_h$. For the boundary terms applying 
Dirichlet boundary conditions through Nitsche's approach, 
the above estimates together with eq.~(\ref{eq:mol_ineq}) imply
\begin{equation}
\begin{aligned}
&\sum_{e\in E^b_h}\frac{1}{h_e^{p-1}}\int_e |u_{h,\delta} - u_0|^p  \lesssim \sum_{e\in E^b_h}\frac{1}{h_e^{p-1}}\int_e \left( |u_{h, \delta} - u_{\delta}|^p + |u_\delta - u|^p \right) 
\\
&\lesssim \left( \frac{h^{p}}{\delta^{p}} + \frac{\delta^p}{h^{p-1}}\right) || u||^p_{W^{1,p}(\Omega)}.
\label{eq:bnd_limsup}
\end{aligned}
\end{equation}
For $\beta \in (\frac{p-1}{p}, 1)$, choosing $\delta = h^\beta$ equations~(\ref{eq:bulk_terms}), (\ref{eq:jump_terms}) and (\ref{eq:bnd_limsup})
tend to $0$ as $h\rightarrow 0$. Here $u \in W^{1,p}(\Omega)$  ($u_\delta$ is continuous by construction), which means $\jumpop{u} =0$ a.e. on every $e \in E_h^i$
 and as $h\rightarrow 0$
\begin{align}
    \norm{u -u_{h,\delta}}^p_{W^{1,p}(\Omega, T_h)} \le \norm{u -u_\delta}_{W^{1,p}(\Omega)}
    + \norm{u_\delta - u_{h,\delta}}_{W^{1,p}(\Omega, T_h)}^p \rightarrow 0.
\end{align}
Therefore, we found a sequence in $V_h^q(\Omega)$ such that eq.~\eqref{limsup_uh_conv} holds.
Next for the convergence of the discrete functional, 
eq.~(\ref{eq:limsup_Eh_conv}), note that 
\begin{equation}\label{eq:limsup_1}
G_h(u_h) \to \nabla u \mbox{ in }L^p(\Omega). 
\end{equation}
Indeed, recalling that $G_h(u_h) = \nabla_h u_h - R_h(u_h)$, $\jumpop{u} = 0$ a.e. and using Lemma \ref{prop:Rh_bound}, we find that
\begin{align*}
\int_\Omega |G_h(u_h) - \nabla u|^p & \lesssim \int_\Omega |\nabla_h u_h - \nabla u|^p + \int_\Omega |R_h(u_h)|^p\\
& \lesssim  \int_\Omega |\nabla_h u_h - \nabla u|^p + \sum_{e\in E_h^i} \frac{1}{h_e^{p-1}} \int_e |\jumpop{u_h} |^p\\
& =  \sum_K \int_K |\nabla_h u_h - \nabla u|^p + \sum_{e\in E_h^i} \frac{1}{h_e^{p-1}} \int_e |\jumpop{u_h - u} |^p\\
& = |u_h - u|^p_{W^{1,p}(\Omega,T_h)}\to 0.
\end{align*}
% \kos{Here I assumed that $\jumpop{u} = 0$. Do we need $p$ larger than dimension?}
As in Lemma \ref{lemma:liminf}, we can write the discrete energy as
\[
\begin{aligned}
\mathcal{E}_h(u_h) & = \int_\Omega W(G_h(u_h)) + \int_\Omega W(\nabla_h u_h) - W(G_h(u_h)) \\
&\qquad + \int_\Omega R_h(u_h):DW(\nabla_h u_h) +  \alpha\, {\rm Pen}(u_h)  \\
&\qquad =: I_h + II_h + III_h +  \alpha\, {\rm Pen}(u_h).
\end{aligned}
\]
Vitali's convergence theorem, \eqref{eq:limsup_1}, and the growth of $W$ say that
\begin{equation}\label{eq:limsup_2}
I_h = \int_\Omega W(G_h(u_h)) \to \int_\Omega W(\nabla u) = \mathcal{E}(u) \mbox{ as } h\to0
\end{equation}
and since the boundary terms converge to zero, eq.~(\ref{limsup_uh_conv}),  it remains to show that $II_h$, $III_h$ 
as well as ${\rm Pen}(u_h)$ all vanish in the limit $h\to 0$. Using the estimates \eqref{eq:III} and \eqref{eq:II}, we find that 
\[
 II_h + III_h \lesssim  {\rm Pen}(u_h) 
\]
and it thus suffices to prove that 
\[
\limsup_{h\to0}{\rm Pen}(u_h)  = 0.
\]
Indeed, since $ |u_h - u |_{W^{1,p}(\Omega,T_h)} \to 0$, it follows that
\[
|u_h|_{W^{1,p}(\Omega,T_h)} \leq C
\]
and hence
\begin{align*}
{\rm Pen}(u_h) & \lesssim \left( \sum_{e \in E_h} \frac{1}{h_e^{p-1}} \int_e |\jumpop{u_h} |^p\right)^{\frac1p}  
= \left( \sum_{e\in E_h} \frac{1}{h_e^{p-1}} \int_e |\jumpop{u_h - u} |^p\right)^{\frac1p} \\
 & \lesssim \left( |u_h -u|_{W^{1,p}(\Omega,T_h) } + 
 \sum_{e\in E_h^b} \frac{1}{h_e^{p-1}} \int_e |\jumpop{u_h - u} |^p \right)^{\frac1p} \to 0 .
\end{align*}
This completes the proof.
\end{proof}

\begin{Remark}
If instead we assume the penalty term \eqref{eq:penalty2}, we re-write the discrete energy as
\[
\begin{aligned}
\mathcal{E}_h(u_h) & = \int_\Omega W(G_h(u_h)) \\
&\quad + \int_\Omega W(\nabla_h u_h) - W(G_h(u_h)) - DW(G_h(u_h)) : R_h(u_h) \\
&\qquad + \int_\Omega \left(DW(G_h(u_h) - DW(\nabla_h u_h))\right) : R_h(u_h) +  \alpha\, {\rm Pen}(u_h)\\
&\qquad =: I_h + II_h + III_h +  \alpha\, {\rm Pen}(u_h).
\end{aligned}
\]
As before
\begin{equation}
I_h = \int_\Omega W(G_h(u_h)) \to \int_\Omega W(\nabla u) = \mathcal{E}(u)
\end{equation}
and it remains to show that $II_h$, $III_h$, and ${\rm Pen}(u_h)$ vanish in the limit $h\to 0$. For $II_h$, the $(p-2)$-growth of $D^2W$ implies that
\begin{align*}
| II_h |  & \leq \int_\Omega \int_0^1(1-t) D^2W(G_h(u_h) + t R_h(u_h)) R_h(u_h) : R_h(u_h)\,dt \\
& \lesssim \int_\Omega \left(1 + |G_h(u_h)|^{p-2} + |R_h(u_h)|^{p-2}\right)|R_h(u_h)|^2 \\
& \lesssim \left(\int_\Omega \left(1 + |G_h(u_h)|^{p} + |R_h(u_h)|^{p}\right)\right)^{\frac{p-2}{p}} \left(\int_\Omega |R_h(u_h)|^p\right)^{\frac2p}
\end{align*}
As in the proof of Lemma \ref{lemma:liminf}, by Lemma \ref{lemma:discrete_grad} and Lemma \ref{prop:Rh_bound}, we find that
\[
\int_\Omega \left(1 + |G_h(u_h)|^{p} + |R_h(u_h)|^{p}\right) \lesssim 1 +  |u_h|^p_{W^{1,p}(\Omega,T_h)}.
\]
Then, since by Lemma \ref{prop:Rh_bound},
  \begin{align*}
   \int_\Omega |R_h(u_h)|^p  \leq 
  C_R \sum_{e \in E_h^i}h_e^{1-p} \int_e |\jumpop{u_h}|^p,
  \end{align*}
we deduce that
\begin{align*}
|II_h| \lesssim \left(1 +  |u_h|^{p-2}_{W^{1,p}(\Omega,T_h)}\right) \left(\sum_{ei}h_e^{1-p} \int_e |\jumpop{u_h}|^p\right)^{\frac2p} \lesssim {\rm Pen}(u_h).
\end{align*}
For the term $III_h$, exactly as we did in the proof of Lemma \ref{lemma:liminf}, see \eqref{eq:liminf_4}, we have that
\begin{equation*}
|III_h|  \lesssim \left(1 +  |u_h|^{p-2}_{W^{1,p}(\Omega,T_h)}\right)\left( \sum_{e }h_e^{1-p} \int_e |\jumpop{u_h}|^p\right)^{\frac2p} \lesssim {\rm Pen}(u_h).
\end{equation*}
We are thus left to prove that ${\rm Pen}(u_h)\to 0$, as $h\to 0$. Indeed, since $| u_h - u |_{W^{1,p}(\Omega,T_h)} \to 0$, and exactly as we did before, it follows that
\[
|u_h|_{W^{1,p}(\Omega,T_h)} \leq C
\]
and that
\begin{align*}
{\rm Pen}(u_h) & \lesssim \left( \sum_e \frac{1}{h^{p-1}_e} \int_e |\jumpop{u_h} |^p\right)^{\frac2p}  \\
& \lesssim \left( \sum_e \frac{1}{h_e^{p-1}} \int_e |\jumpop{u_h - u} |^p\right)^{\frac2p} \lesssim |u_h -u|^{\frac2p}_{W^{1,p}(\Omega,T_h)}  \to 0,
\end{align*}
completing the proof of Lemma \ref{lemma:limsup} for the penalty \eqref{eq:penalty2}.
\end{Remark}

We conclude by demonstrating the proof of Theorem \ref{thm:main}.

\begin{proof}[Proof of Theorem \ref{thm:main}]

Suppose that $u_h \in V^q_h$ is given such that
\[
\mathcal{E}_h(u_h) = \inf_{V^q_h} \mathcal{E}_h.
\]
Note that for any $v \in \mathbb{A}(\Omega)$, by Lemma \ref{lemma:limsup} we can find a sequence $v_h \in V^q_h$ such that
\[
v_h \to v \mbox{ in }L^p(\Omega)
\]
and $\limsup \mathcal{E}_h(v_h) \leq \mathcal{E}(v)$. In particular, up to a subsequence,
\[
\mathcal{E}_h(u_h) \leq \mathcal{E}_h(v_h) \leq \limsup \mathcal{E}_h(v_h) \leq  \mathcal{E}(v) < \infty.
\]
Hence, by Lemma \ref{lemma:compactness}, there exists $u\in \mathbb{A}(\Omega)$ such that
\[
u_h \to u \mbox{ in }L^p(\Omega)
\]
and, for any $v \in \mathbb{A}(\Omega)$,
\[
\mathcal{E}(u) \leq \liminf_{h\to0} \mathcal{E}_h(u_h) \leq \limsup \mathcal{E}_h(v_h) \leq \mathcal{E}(v),
\]
where $v_h$ is the recovery sequence for $v$ whose existence is guaranteed by Lemma \ref{lemma:limsup}. That is, 
\[
\mathcal{E}(u) = \inf_{\mathbb{A}(\Omega)} \mathcal{E}
\]
completing the proof.
\end{proof}

\section{A priori estimates} In this section we shall focus on a priori error estimates between the exact solution $u$ and the solution $u_h$ of the discrete  \emph{Euler-Lagrange equations}. 
Under certain assumptions on $u$ and on the energy $W$ we will show that $u_h$ exists and satisfies optimal-order a priori error bounds. To this end, we introduce some further
notation.  As pointed out in \eqref {D_en_dg_0} the 
%\subsubsection{Euler-Lagrange equations}  
  first variation  of \eqref{en_dg_1} is 
\begin{equation}\label{D_en_dg_0_n0}
\begin{aligned}
a_h(u_h, v_h) :=&\< D\mathcal{E}_{dg} [u_h] , v_h \>  = \sum_{K \in T_h}\int_{K}  S(\nabla u_h(x)) \nabla v_h(x)\\ 
 & -\sum_{e \in E^i_h}\int_{e}\ S  (\Av{ \nabla u_h}  )  \cdot \J {v_h \otimes n_e}  ds
 \\
 &-\sum_{e \in E^i_h}\int_{e}\ 
\Bigl (   \partial _\zeta S (\Av{\nabla u_h  } )\, \J {u_h  \otimes n_e }  {\Bigl) \cdot}
   \Av{\nabla v_h }   
   ds \\
&   + \< D \text{Pen}\, ( {u_h} ), v_h \>.
 \end{aligned}
\end{equation}
%where $\partial _\zeta S (\Av{\nabla u_h  } )\, \J {u_h  \otimes n_e }  {\Bigl) \cdot}
%   \Av{\nabla v_h } = $ $(\partial _\zeta S (\Av{\nabla u_h  } )_{ij, kl} v_{h_{k,l}} u_{h_i} n_{e_j}$.
%A similar relationship holds for \eqref{en_dg_2}. 
Notice that critical points $u_h$ of the Euler-Lagrange equations satisfy
\begin{equation}\label{D_en_dg_est_u_h}
\begin{aligned}
a_h(u_h, v_h) =  \< f , v_h \> \qquad \text {for all }  v_h \in  V^q_h(\Omega)\, .
 \end{aligned}
\end{equation}

%  
%   but it was not clear if the resulting 
%form was indeed a first variation of a discrete functional, and thus the association to an energy minimisation problem was missing. 
%In the case of the standard potential $ W(F) = \frac 1 2 |F|^2$ corresponding to the Laplace equation, 
% \eqref {D_en_dg_0} reduces to the well known method, 
%\begin{equation}\label{D_en_dg_1}
%\begin{aligned}
%\< D\mathcal{E}_{dg} [u_h] , v\> &= \sum_{K \in T_h}\int_{K}   \nabla u_h(x) \nabla v_h(x)\\ 
% &-\sum_{e \in E^i_h}\int_{e}\  \Av{ \nabla u_h}     \cdot \J {v_h \otimes n_e}  
%  + \J {u_h  \otimes n_e}   \cdot \Av{\nabla v_h } ds \\
%&   + \< D \text{Pen}\, ( {u_h} ), v_h \> .
% \end{aligned}
%\end{equation}

The minimiser of the continuous problem will satisfy   (employing the summation convention), 
\begin{equation}
\begin{aligned} \label{D_en_dg_est_u}
 a (u , v ) :=  
          \int_\Omega  S_{i\alpha} (\nabla y(x))   \,  \partial_\alpha  v^i  (x) \, dx  = \< f , v  \> \qquad \text {for all }     v\in V \, .
\end{aligned}
\end{equation}
We shall need the derivative of  the Piola-Kirchhoff stress  tensor $S = \partial _\zeta \, W = \{ A_{i\alpha j \beta} \}$ defined by 
\begin{equation}
\begin{aligned}
A_{i\alpha j \beta}  (\zeta)  &=  \frac{\partial   } {\partial  {\zeta_{j\beta}}} S_{i \alpha} (\zeta)    =   \frac{\partial ^2 } {\partial  {\zeta_{i\alpha}} \partial  {\zeta_{j\beta}}}  W (\zeta )  \, .
\end{aligned}
\end{equation}
Dealing with smooth solutions of the dynamic and the static problem,\\   \cite{DafHrusa_1985, makr_1993, ortner2007discontinuous}, a  coercivity assumption on $A_{i\alpha j \beta}  $ was instrumental: %for a given open set $ \mathcal{O}$ of $\R ^{d\times d}$
\begin{equation}\label{A_coerc_1}
\sum _{i, \alpha , j , \beta }A_{i\alpha j \beta}  (\eta ) \zeta _\alpha \zeta _\beta \xi _i \xi _j  \geq M_1 | \zeta | ^2 |\xi | ^2   \, ,   \qquad \text {for  } M_1  >0\, , 
\end{equation}
and  for all  % $ \eta \in \mathcal{O}, $ and 
$\zeta  ,  \xi \in \R ^d\, .$  In addition $A_{i\alpha j \beta} $ is assumed to be symmetric. Hypothesis  \eqref{A_coerc_1} allows the inclusion of 
physically relevant energies, \cite{DafHrusa_1985}, and it will be assumed in the sequel.   It implies a G\aa rding type inequality
\begin{equation}\label{A_coerc}
\sum _{i, \alpha , j , \beta } \int_\Omega A_{i\alpha j \beta}  (\nabla u(x))   \,  \partial_\beta  v^j  (x) \partial_\alpha  v^i  (x) \, dx    \geq M_0 (u)  \| v \| _ {H^1 (\Omega)} - \mu _0 (u) \| v \| _ {L^2 (\Omega)} \, ,
\end{equation}
which is the key assumption in our analysis. 
%suchthatVu[Qx[O,T]]cX (see the proof of [10, Theorem 5.2]). If 3 is the distance of X from a&, and
%Yj :={a E RNXN: inflae Iq- 1 < d1}, we let
%Z:= {f E Loo(Q)NxNA (x) E # X EQ}. ej

\subsection{Error Analysis}

In this section we perform the convergence analysis of our discontinuous Galerkin finite scheme for the penalty term \eqref{eq:penalty2} whenever $p=2$. The general case $p>2$ %and the consideration of the penalty term \eqref{eq:penalty} 
depends  on generalised G\aa rding inequalities, requires more delicate analysis, and is beyond the scope of this paper; see Remark \ref{gen_Garding}.

As mentioned, we follow the general plan introduced in  \cite{makr_1993, ortner2007discontinuous} considering appropriate linearisation problems and fixed point arguments. Note that, in comparison to form \eqref{D_en_dg_0_n0}, the method considered in \cite{ortner2007discontinuous} was similar but did not include the term involving $ \partial _\zeta S.$ As a result, the additional technical aspects in the analysis are minor, as discussed below, and can be handled using standard estimates.
 In fact, one can show by following the arguments of \cite{ortner2007discontinuous} that the linearised form satisfies a discrete  G\aa rding-type inequality, see  \cite[Lemma 4.1]{ortner2007discontinuous}. We build on this result, (which implies \eqref{d_garding}), and we show 
the discrete inf-sup condition Lemma 4.1 without additional assumptions. \cite[Lemma 4.2, Lemma 4.3]{ortner2007discontinuous} are used as well. Lemma 4.1. provides   what is needed to complete the fixed point argument, see below for details. The result of \cite{ortner2007discontinuous} in the stationary case assumes a stronger coercivity,  (\eqref{A_coerc} with $m_0 =0$),  see \cite[Section 5, p.1382]{ortner2007discontinuous}. We briefly describe the main steps of the analysis and we provide details of the proof of the Lemma 4.1 below. 

\begin{Remark}\label {gen_Garding} The starting point of the $L^2-$ analysis considered herein, as well as in the previous works, \cite{ortner2007discontinuous}, is the G\aa rding inequality \eqref{A_coerc}. Generalising \eqref{A_coerc} to $L^p$ setting is not straightforward, see e.g. \cite{Koumatos_Vikelis_2021} and its references. The form of the  G\aa rding inequality might depend on quantities  
of the form $\| v \| _{L^p (\Omega ) }^p + \| v \| _{L^2 (\Omega )} ^2.$ 
It is possible, although technically more demanding, to extend our analysis under the validity of such assumptions. This will be the subject of a forthcoming work.  
\end{Remark}

Let $W_h= I _h u $ be an appropriate conforming finite element projection  of $u$ (which is assumed to be continuous and satisfies typical optimal order error bounds). 
%\subsubsection{Euler-Lagrange equations}  
Then, the consistency error is defined as 
\begin{equation}\label{cons_error}
\begin{aligned}
a_h(W_h, v_h)  = \< f, v_h \> + \< R_h, v_h\>
 \end{aligned}
\end{equation}
where $\< R_h, v_h\>$ is the consistency error satisfying optimal order error bounds. 
Then, 
\begin{equation}\label{error_eq_1}
\begin{aligned}
a_h(u_h, v_h)  - a_h(W_h, v_h) =   - \< R_h, v_h\>\, . 
 \end{aligned}
\end{equation}
Next we shall evaluate the left hand side of this equation:
\begin{equation*}\label{D_en_dg_0_nt}
\begin{aligned}
a_h(W_h, &v_h)  - a_h(u_h, v_h) =  \sum_{K \in T_h}\int_{K}  (S(\nabla W_h(x)) - S(\nabla u_h(x)) )\nabla v_h(x)\\ 
 & -\sum_{e \in E^i_h}\int_{e}\Big (\ S  (\Av{ \nabla W_h}  ) - \ S  (\Av{ \nabla u_h}  ) \Big) \cdot \J {v_h \otimes n_e}  ds
 \\
 &-\sum_{e \in E^i_h}\int_{e}\ 
\Bigl (  ( \partial _\zeta S (\Av{\nabla W_h  }  -  \partial _\zeta S (\Av{\nabla u_h  } )\, \J {W_h  \otimes n_e }  {\Bigl) \cdot}
   \Av{\nabla v_h }   
   ds \\
   &-\sum_{e \in E^i_h}\int_{e}\ 
\Bigl (   \partial _\zeta S (\Av{\nabla u_h  } )\, \J {(W_h - u_h ) \otimes n_e }  {\Bigl) \cdot}
   \Av{\nabla v_h }   
   ds \\
&   + \< D \text{Pen}\, ( {W_h - u_h} ), v_h \>.
 \end{aligned}
\end{equation*}
Since $W_h $ is continuous, the third term on the right hand side vanishes. Thus,  we conclude, 
\begin{equation}\label{error_eq_2_0}
\begin{aligned}
A_h(u_h;  u_h - W_h , v_h)  =   - \< R_h, v_h\>\, , 
 \end{aligned}
\end{equation}
where we have set, 
\begin{equation}\label{error_eq_2}
\begin{aligned}
\overline A (\nabla w \ ; \ \nabla u   )  =    \int _0^1    \partial _\zeta S (\nabla w  + \tau (\nabla u  -\nabla w  ) ) d\tau \, , 
 \end{aligned}
\end{equation}
and 
\begin{equation*}\label{D_en_dg_0_nt2}
\begin{aligned}
A_h(\varphi\  ;\ &  w_h , v_h)    \\
:=& \sum_{K \in T_h}\int_{K}  \overline A (\nabla W_h \ ; \ \nabla \varphi   )  \nabla w_h(x) \nabla v_h(x)\\ 
 & -\sum_{e \in E^i_h}\int_{e}\Big ( \overline A (\Av{\nabla W_h}  \ ; \ \Av{\nabla \varphi}   )   \Av{ \nabla (w_h )}  )  \Big) \cdot \J {v_h \otimes n_e}  ds
 \\
    &-\sum_{e \in E^i_h}\int_{e}\ 
\Bigl (   \partial _\zeta S (\Av{\nabla \varphi  } )\, \J {w_h   \otimes n_e }  {\Bigl) \cdot}
   \Av{\nabla v_h }   
   ds \\
&   + \< D \text{Pen}\, ( {w_h} ), v_h \>.
 \end{aligned}
\end{equation*}
Therefore for fixed $\varphi ,$   $A_h(\varphi\  ;\   w_h , v_h) $  is a bilinear form (with respect to $w_h , v_h$). This form is closely related to the standard symmetric interior penalty DG 
bilinear form. To derive the estimates one needs to define a set 
\begin{equation}\label{Z}
\begin{aligned}
\mathbf{Z} _\delta (u)= \{ \Phi \in L ^ \infty ( \R ^{d\times d}) \ :  \|  \Phi - \nabla u \| _{ L ^\infty} < \delta \,  \} \, ,
 \end{aligned}
\end{equation}
where $u$ is the (assumed smooth) minimiser of the continuous problem  and $\delta$ is a fixed number which guarantees that $\Phi \in \mathbf{Z} _\delta (u)$ satisfy $ \Phi (x) \in \mathcal {O}. $
Without being very precise, if a function $\phi \in V^q_h(\Omega) $ and its piecewise derivative is in $\mathbf{Z}_\delta (u)  $ then the coercivity of the principal part of $A_h(\varphi\  ;\  w_h , v_h)$
will follow in view of \eqref{A_coerc} and of the fact $\nabla W_h $ will be close to $\nabla u$ for $h$ small enough.  To simplify matters we shall assume that 
\begin{equation}\label{Z_W_h}
\begin{aligned}
\text{for $h< h_0,$ if
 $\Phi \in \mathbf{Z} _\delta (W_h)$ then  $ \Phi (x) \in \mathcal {O}. $}
 \end{aligned}
\end{equation}
Next, %following \cite{???},  
define the set $\JJ$ as 
\begin{equation}\label{J}
\begin{aligned}
\JJ = \{  v_h \in V^q_h(\Omega) \ : \  & \| v_ h - W_h \| _{H ^1 (\Omega,T_h) } \leq C_\star (u ) h^r \\
&\quad \text{ and in piecewise sense } 
 \nabla v_h \in \mathbf{Z} _\delta (W_h) \, \}\, , 
 \end{aligned}
\end{equation}
where we use the notation $H ^1 (\Omega,T_h) = {W^{1,2}(\Omega,T_h)}\, .$

The next result is essential. The assumption that $\Omega $ is convex can be relaxed: 
\begin{Lemma}
\label{lemma:infsup} 
Assume that $\Omega $ is convex and that $\varphi \in \JJ, $ then there exists a $h_1>0$ such that for all $ h\leq h_1,$ there exist a constant $ \gamma _0$ such that 
\begin{equation}\label{inf_sup}
\begin{aligned}
 \sup _{ v_h \in V^q_h(\Omega) } \frac { A_h(\varphi\  ;\    w_h , v_h) } { \| v_ h   \| _{H ^1 (\Omega,T_h) } } \geq \gamma _0\, \| w_ h   \| _{H ^1 (\Omega,T_h) }
  \end{aligned}
\end{equation}
where the constant $ \gamma _0$ is independent of  $w_h, \varphi \in V^q_h(\Omega)$ and of $h.$ 
\end{Lemma}
\begin{proof}
%We first consider the auxiliary bilinear form
%\begin{equation}\label{aux_A_h}
%\begin{aligned}
%D_h (  w_h , v_h)  &= A_h(\varphi\  ;\    w_h , v_h)  + \mu _1 \<   w_h , v_h \> \\
%D  (  w  , v )  &=  \sum_{K \in T_h}\int_{K}  \overline A (\nabla W_h \ ; \ \nabla \varphi   )  \nabla w (x) \nabla v (x) + \mu _1 \<   w  , v  \> \\ 
% \end{aligned}
%\end{equation} where $\mu _1$ are chosen such that both forms are coercive. Define now the projection $ P_D : H^1 \to V^q_h(\Omega) $ by  
%\begin{equation*}\label{aux_A_h}
%\begin{aligned}
%D_h (  w_h , P_D v)  &= D  (  w_h ,   v)   
% \end{aligned}
%\end{equation*} 
%Let  $w_h$ be a given element of $V^q_h(\Omega) $. 
%Let  $v_h  =  w_h + P_D z $ where $z$  shall be defined below. 
%Then
%\begin{equation*}\label{D_en_dg_0_n}
%\begin{aligned}
%A_h &(\varphi\  ;\    w_h , v_h)    =  A_h(\varphi\  ;\    w_h , w_h)  + A_h(\varphi\  ;\    w_h , P_D z)  \\
% &=  A_h(\varphi\  ;\    w_h , w_h)    + \mu _1 \<   w_h , w_h \>  + A_h(\varphi\  ;\    w_h , P_D z) - \mu _1 \<   w_h , w_h \> \\
%  &=  A_h(\varphi\  ;\    w_h , w_h)    + \mu _1 \<   w_h , w_h \>  + A (\varphi\  ;\    w_h ,   z) - \mu _1 \<   w_h , w_h \> \\
%  & \qquad   + \mu _1 \<   w_h , z- P_D z \> \\
%  &=  A_h(\varphi\  ;\    w_h , w_h)    + \mu _1 \<   w_h , w_h \>  + A (\varphi\  ;\    \overline w_h ,   z) - \mu _1 \< \overline   w_h , w_h \> \\
%  & \qquad   + \mu _1 \<   w_h , z- P_D z \>  + A (\varphi\  ;\  w_h-  \overline w_h ,   z) - \mu _1 \<w_h -  \overline   w_h , w_h \>\\
% \end{aligned}
%\end{equation*}
%\end{proof}
%
%
%
%
%
We adopt in our setting an original idea of \cite{Schatz_1974}, and its \emph{infsup} version from \cite{MakrIB_1996}.
We first consider the auxiliary bilinear forms
\begin{equation}\label{aux_A_h}
\begin{aligned}
D_h (  w_h , v_h)  &=  \sum_{K \in T_h}\int_{K}   \partial _\zeta S ( \nabla u  )\nabla w_h(x) \nabla v_h(x)\\ 
 & -\sum_{e \in E^i_h}\int_{e}\Big (  \partial _\zeta S ( \nabla u  ) )   \Av{ \nabla (w_h )}  )  \Big) \cdot \J {v_h \otimes n_e}  ds
 \\
    &-\sum_{e \in E^i_h}\int_{e}\ 
\Bigl (   \partial _\zeta S ( \nabla u  )\, \J {w_h   \otimes n_e }  {\Bigl) \cdot}
   \Av{\nabla v_h }   
   ds \\
&   + \< D \text{Pen}\, ( {w_h} ), v_h \> + \mu _1 \< {w_h} , v_h \>,   \quad \text{and}\\
D  (  w  , v )  &=  \sum_{K \in T_h}\int_{K}   \partial _\zeta S ( \nabla u  )  \nabla w (x) \nabla v (x) + \mu _1 \<   w  , v  \> \\ 
			&= :  A ( u\ ; \    w  , v  )  + \mu _1 \<   w  , v  \>
 \end{aligned}
\end{equation} where $\mu _1$ is chosen such that both forms are coercive. Define now the projection $ P_D : H^1_0(\Omega) \to V^q_{h, C}(\Omega) \cap H^1_0 (\Omega)$ by  
\begin{equation*}
\begin{aligned}
D  (  \psi  , P_D v)  &= D  (  \psi  ,   v), \quad \text{for all } \psi \in V^q_{h, C}(\Omega) \cap H^1_0 (\Omega)\, .
 \end{aligned}
\end{equation*} 
Let  $w_h$ be a given element of $V^q_h(\Omega) $. 
Let  $v_h  =  w_h + P_D z $ where $z$  shall be defined below. 
Then, denoting by $ \overline w_h$ the reconstruction operator of $w_h$ defined in \cite{karakashian2003posteriori}, we have
\begin{equation*}\label{D_en_dg_0_nt3}
\begin{aligned}
A_h &(\varphi\  ;\    w_h , v_h)    =  A_h(\varphi\  ;\    w_h , w_h)  + A_h(\varphi\  ;\    w_h , P_D z)  \\
 &=  A_h(\varphi\  ;\    w_h , w_h)    + \mu _1 \<   w_h , w_h \>  + A_h(\varphi\  ;\    w_h -\overline w_h, P_D z) \\
 &  + A_h(\varphi\  ;\     \overline w_h, P_D z)   -  A ( u\ ; \   \overline w_h , P_D z )  +  A ( u\ ; \   \overline w_h , P_D z )
 - \mu _1 \<   w_h , w_h \> \\
 &=  A_h(\varphi\  ;\    w_h , w_h)    + \mu _1 \<   w_h , w_h \>  + A_h(\varphi\  ;\    w_h -\overline w_h, P_D z) \\
 &  + \Big [ A_h(\varphi\  ;\     \overline w_h, P_D z)   -  A ( u\ ; \   \overline w_h , P_D z ) \Big ] +  A ( u\ ; \   \overline w_h ,  z )
 - \mu _1 \<   w_h , w_h \> \\
  & \qquad   + \mu _1 \<  \overline w_h , z- P_D z \> \\
  &= \left[ A_h(\varphi\  ;\    w_h , w_h)    + \mu _1 \<   w_h , w_h \>  + A_h(\varphi\  ;\    w_h -\overline w_h, P_D z)\right ] \\
 &  + \Big [ A_h(\varphi\  ;\     \overline w_h, P_D z)   -  A ( u\ ; \   \overline w_h , P_D z ) \Big ] +  \Big [   A ( u\ ; \   \overline w_h ,  z ) - \mu_1 \<  \overline    w_h , w_h \> \Big ]\\
&  - \mu _1 \<   w_h - \overline    w_h , w_h \>  
     + \mu _1 \<  \overline w_h , z- P_D z \> \, =: T_1+T_2+T_3+T_4+T_5.\\
 \end{aligned}
\end{equation*}
Select now $z\in H^1_0 (\Omega) $ as the solution of 
\begin{equation}\label{def_z}
\begin{aligned}
 A ( u\ ; \   v ,  z ) = \mu_1 \<       v , w_h \>  \qquad \text{for all } v\in  H^1_0 (\Omega)\,,
 \end{aligned}
\end{equation}
and then  $T_3=0$. We claim that
\begin{align}\label{eq:T1}
    A_h (\varphi\  ;\    w_h , v_h) \geq  C_0  \|  w_h \| _{H ^1 (\Omega,T_h) }^2.
\end{align}
Indeed, following the arguments of   \cite[Lemma 4.1]{ortner2007discontinuous}   we can show that
\begin{equation}
    \label{d_garding}
    \begin{split}
         A_h(\varphi\  ;\    w_h , w_h)\gtrsim (1-\varepsilon^2)\sum_K&\int_K |\nabla w_h|^2-\frac{1}{\varepsilon^2}\sum_{e\in E^i_h}\frac{1}{h_e}\int_{e}|\jumpop{w_h}|^2\\
    &+\alpha \sum_{e\in E^i_h}\frac{1}{h_e}\int_{e}|\jumpop{w_h}|^2-\|w_h\|_{L^2(\Omega)}^2.
    \end{split}
\end{equation}
Additionally, (compare to \cite[Lemma 4.2]{ortner2007discontinuous})   we have 
\begin{align*}
     A_h(\varphi\  ;\    w_h -\overline w_h, P_D z)&\lesssim \|  w_h -\overline{w_h}\| _{H ^1 (\Omega,T_h) }\|  P_Dz \| _{H ^1 (\Omega) }  \\
     &\lesssim \|  w_h -\overline{w_h}\| _{H ^1 (\Omega,T_h) }\|  w_h \| _{H ^1 (\Omega,T_h) }\\
     &\lesssim \varepsilon^2 \|  w_h \|^2 _{H ^1 (\Omega,T_h) }+\frac{1}{\varepsilon^2}\|  w_h -\overline{w_h}\|^2 _{H ^1 (\Omega,T_h) }\\
     &\lesssim \varepsilon^2 \|  w_h \|^2 _{H ^1 (\Omega,T_h) }+\frac{1}{\varepsilon^2}\sum_{e\in E^i_h}\frac{1}{h_e}\int_{e}|\jumpop{w_h}|^2.
\end{align*}
%Then, choosing $\varepsilon$ small enough, and $\alpha$ and $\mu_1$ large we shall prove \eqref{eq:T1} below.
Next, concerning the second term $T_2$, we use  \cite[Lemma 4.3]{ortner2007discontinuous} to get that
\begin{align*}
    -T_2&\lesssim \|\nabla\varphi-\nabla u\|_\infty\|\overline{w_h}\| _{H ^1 (\Omega ) }\|  P_Dz \| _{H ^1 (\Omega) }\lesssim \delta  \|  w_h \|^2 _{H ^1 (\Omega,T_h) }.
\end{align*}
For the last two terms, $T_4$ and $T_5,$ we have that
\begin{align*}
    \big |\mu_1\langle w_h-\overline{w_h},w_h\rangle\big|&\lesssim  \mu_1^2\|w_h-\overline{w_h}\|^2_{L^2(\Omega)}+\|w_h\|_{L^2(\Omega)}^2\\
    &\lesssim h^2\mu_1^2\sum_{e\in E^i_h}\frac{1}{h_e}\int_{e}|\jumpop{w_h}|^2+\|w_h\|_{L^2(\Omega)}^2,
\end{align*}
and
\begin{align*}
    \big |\mu _1 \<  \overline w_h ,& z- P_D z \> \big |\lesssim \|\overline{w_h}\|^2_{L^2(\Omega)}+\mu_1^2\|z-P_Dz\|_{L^2(\Omega)}^2\\
    &\lesssim h^2\sum_{e\in E^i_h}\frac{1}{h_e}\int_{e}|\jumpop{w_h}|^2+h\|w_h\|_{L^2(\Omega)}^2+h^{2}\mu_1^2 \|  w_h \| _{H ^1 (\Omega,T_h) }^2\, .
\end{align*}

Combining all the above we finally conclude that
\begin{align*}
   A_h (\varphi\  ;\    w_h , v_h)&\gtrsim (1-\varepsilon^2)\sum_K\int_K |\nabla w_h|^2+(\alpha-\frac{1}{\varepsilon^2}-h^2\mu_1^2-h)\sum_{e\in E^i_h}\frac{1}{h_e}\int_{e}|\jumpop{w_h}|^2\\
   &+(\mu_1-1-h)\|w_h\|_{L^2(\Omega)}^2-(\delta+h^2\mu_1^2+\varepsilon^2)\|  w_h \| _{H ^1 (\Omega,T_h) }^2.
\end{align*}
Choose now for example $\mu_1>1,\,\varepsilon<1$ and $\alpha>1/\varepsilon^2$ and then
\begin{align*}
     A_h (\varphi\  ;\    w_h , v_h)\gtrsim (1-\delta-h^2\mu_1^2-\varepsilon^2)\|  w_h \| _{H ^1 (\Omega,T_h) }^2,
\end{align*}
and by choosing $\delta<1/2-\varepsilon^2$ (for a possibly smaller $\varepsilon$) and $h$ small enough we prove our claim.
%Since $A_h(\varphi\  ;\    w_h , w_h) + \mu _1 \<   w_h , w_h \> $ is coercive, we have 
%\begin{equation*}\label{D_en_dg_0_n}
%\begin{aligned}
%A_h &(\varphi\  ;\    w_h , v_h)    \geq   C_0  \|  w_h \| _{H ^1 (\Omega,T_h) }   ^2\\
%  &    + A_h(\varphi\  ;\    w_h -\overline w_h, P_D z) \\
% &  + \Big [ A_h(\varphi\  ;\     \overline w_h, P_D z)   -  A ( u\ ; \   \overline w_h , P_D z ) \Big ] \\
%& \qquad  - \mu _1 \<   w_h - \overline    w_h , w_h \>  
%     + \mu _1 \<  \overline w_h , z- P_D z \> \, .\\
% \end{aligned}
%\end{equation*}
%Using the properties of $\overline w_h,$ the Lipschitz assumption of $A_{i\alpha j \beta},$ we conclude (\textcolor{red}{these calculations need to be writen in more detail}) by a possible 
%increase of the penalty constant and for $\delta $ small enough,
%\begin{equation*}\label{D_en_dg_0_n}
%\begin{aligned}
%A_h &(\varphi\  ;\    w_h , v_h)    \geq   C_1  \|  w_h \| _{H ^1 (\Omega,T_h) }   ^2
%     + \mu _1 \<  \overline w_h , z- P_D z \> \, .\\
% \end{aligned}
%\end{equation*}
\noindent We recall here that by using standard properties of conforming finite elements and the elliptic regularity of \eqref{def_z} we know that
\begin{equation*}\label{D_en_dg_0_n}
\begin{aligned}
\| z- P_D z \| _{L^2 (\Omega,T_h) }   \leq C h ^{2} \| z\| _{H ^2 (\Omega) }\leq C h ^{2} \| w _h\|  _{L^2  (\Omega)}  \leq C h ^{2} \| w _h\|  _{H ^1 (\Omega,T_h) }   \, .\\
 \end{aligned}
\end{equation*}
%Therefore for $h$ small enough, 
%\begin{equation*}\label{D_en_dg_0_n}
%\begin{aligned}
%A_h &(\varphi\  ;\    w_h , v_h)    \geq   C_2  \|  w_h \| _{H ^1 (\Omega,T_h) }   ^2\, %.\\
% \end{aligned}
%\end{equation*}
Furthermore, we have, 
\begin{equation*}\label{D_en_dg_0_nt4}
\begin{aligned}
\|  v_h \| _{H ^1 (\Omega,T_h) }  \leq      \|     w_h  \| _{H ^1 (\Omega,T_h) } + \| P_D z \| _{H ^1 (\Omega) }    \leq  \|     w_h  \| _{H ^1 (\Omega,T_h) } + \|   z \| _{H ^1 (\Omega) }  \, \\
 \leq c_2   \|     w_h  \| _{H ^1 (\Omega,T_h) }.
 \end{aligned}
\end{equation*}
Combining the above bounds we complete the proof by assuming that   $ h\leq h_1,$ for an appropriate   $h_1>0.$ 
\end{proof}
Next, following  \cite{makr_1993, ortner2007discontinuous}, we show that the map $\N : \JJ \to V^q_h(\Omega) $ defined by 
\begin{equation}\label{error_eq_3}
\begin{aligned}
A_h(\phi ;  \N (\phi)   - W_h , v_h)  =  A_h(u ;  u - W_h , v_h)   , 
 \end{aligned}
\end{equation}
has a unique fixed point in $\JJ.$ To this aim, we first prove that $\N$ maps $\JJ$ into itself. For $v_h\in V^q_h(\Omega)$, by continuity of the bilinear form, we get that
\begin{align*}
    \frac { A_h(\varphi\  ;\    \N (\phi)-W_h , v_h) } { \| v_ h   \| _{H ^1 (\Omega,T_h) } }=\frac { A_h(u\  ;\  u-W_h , v_h) } { \| v_ h   \| _{H ^1 (\Omega,T_h) } }\lesssim \frac{\|u-W_h\|_{H ^1 (\Omega,T_h)}\|v_h\|_{H ^1 (\Omega,T_h)}}{\|v_h\|_{H ^1 (\Omega,T_h)}},
\end{align*}
and so by taking the supremum for all $v_h\in V^q_h(\Omega)$ and applying Lemma \ref{lemma:infsup} we infer that
\begin{align*}
    \|\N (\phi)- W_h\|_{H ^1 (\Omega,T_h)}\lesssim \| u - W_h\|_{H ^1 (\Omega,T_h)}\leq C(u) h^r,
\end{align*}
where in the last inequality we used the properties of the approximation operator $I_h$. Hence $\N (\phi)\in\JJ$. It remains to show that $\N$ is a contraction. Indeed, for $\phi,\,\psi\in\JJ$ we have that
\begin{align*}
    A_h(\phi ;  \N (\phi)   - W_h , v_h)  &=  A_h(u ;  u - W_h , v_h) \\
    A_h(\psi ;  \N (\psi)   - W_h , v_h)  &=  A_h(u ;  u - W_h , v_h) ,
\end{align*}
for all $v_h\in V_h^q(\Omega)$. By subtracting the second from the first line we have that
\begin{align*}
    A_h(\phi ;  \N (\phi)   - \N(\psi) , v_h)=A_h(\psi ;  \N (\psi)   - W_h , v_h)-A_h(\phi ;  \N (\psi)   - W_h , v_h),
\end{align*}
hence we infer that
\begin{align*}
    \frac { A_h(\varphi\  ;\    \N (\phi)-\N (\psi) , v_h) } { \| v_ h   \| _{H ^1 (\Omega,T_h) } }&=\frac { A_h(\psi ;  \N (\psi)   - W_h , v_h)-A_h(\phi ;  \N (\psi)   - W_h , v_h) } { \| v_ h   \| _{H ^1 (\Omega,T_h) } }\\
    &\lesssim \|\nabla\phi-\nabla\psi\|_\infty \| \N (\psi) - W_h\|_{H ^1 (\Omega,T_h)}.
\end{align*}
Taking again the supremum over $v_h\in V^q_h(\Omega)$ and applying Lemma \ref{lemma:infsup}, we deduce that
\begin{align*}
    \| \N (\psi) - \N(\psi)\|_{H ^1 (\Omega,T_h)}&\lesssim \|\nabla\phi-\nabla\psi\|_\infty \| \N (\psi) - W_h\|_{H ^1 (\Omega,T_h)}\\
    &\lesssim C_\ast(u)h^r\|\nabla\phi-\nabla\psi\|_{L^2(\Omega)}\\
    &\leq c(h)\|\phi-\psi \|_{H ^1 (\Omega,T_h)},
\end{align*}
where  we used the fact that $\N(\psi)\in\JJ$ and a proper inverse inequality. This proves our claim and hence $\N$ has a unique fixed point $\phi_*= \N (\phi_*) $ which solves the discrete problem. Furthermore,  for $u_h:=\phi_\ast =  \N (\phi_*)  $ belongs to $\JJ,$ and satisfies the error estimate
\begin{equation*}\label{final_err_est}
\begin{aligned}
\| u - u_h \| _{H ^1 (\Omega,T_h) }  \leq \| u_ h - W_h \| _{H ^1 (\Omega,T_h) }  + \| u  - W_h \| _{H ^1 (\Omega,T_h) }  \leq \tilde C (u ) h^r  \, .
 \end{aligned}
\end{equation*}

%where $\partial _\zeta S (\Av{\nabla u_h  } )\, \J {u_h  \otimes n_e }  {\Bigl) \cdot}
%   \Av{\nabla v_h } = $ $(\partial _\zeta S (\Av{\nabla u_h  } )_{ij, kl} v_{h_{k,l}} u_{h_i} n_{e_j}$.
%A similar relationship holds for \eqref{en_dg_2}. 

\section{Computational Results}\label{computations}

In this section, employing the proposed penalty terms of (\ref{eq:penalty}) and (\ref{eq:penalty2}), plane deformations for elastic materials 
described by convex strain energy functions are computed in order to examine the numerical validity of the proposed schemes. 
Let $y :\Omega \subset \R^2 \rightarrow \Omega^*$ denote a 
planar deformation, where $\Omega$, $\Omega^*$ are the current and deformed states respectively. Specifying boundary conditions the 
deformation of the body is computed by minimising the total potential energy
\begin{align}
\min_{\substack{ y \in W^{1,p}(\Omega) \\ y = y_0 \text{ on } \partial \Omega} } \mathcal{E}( y) =  \min_{\substack{ y \in W^{1,p}(\Omega) \\ y = y_0 \text{ on } \partial \Omega} }
\int_\Omega W(\nabla y) dx,
\label{sim:energMin}
\end{align}
where $W$ is the strain energy function. Note that if $y_0$ corresponds to a homogeneous deformation, i.e. $y_0(x) = F_0 x$, $F_0 \in R^{2 \times 2}$ is constant, then for 
convex strain energy functions eq.~(\ref{sim:energMin}) is minimised  for $y = 
y_0$ and every other possible minimiser has greater or equal total potential energy. Here we 
choose $y_0(x) = F_0 x$ and $F_0 = diag\{1, 1 + \beta\} $, 
 $\beta >0$ ($\beta<0$) which implies uniaxial tension (compression).
 Then, given a strain energy $W$, we  minimise the discrete energy of eq.~(\ref{eq:energy_discrete}) for $y_h \in V^1_h(\Omega)$, i.e. piecewise polynomial functions of first degree. The discretization 
 is provided by the FEniCS project \cite{alnaes2015fenics} and details for the minimisation algorithm are given in \cite{grekas2022approximations}.
 We choose the convex  strain energy functions $W(F) = |F|^4$ and $W(F) = |F|^6$, $F = \nabla y$ is the deformation gradient. For the former energy $y_0$ corresponds to uniaxial tension, $\beta =0.1$, and for the latter boundary conditions  of uniaxial compression, $\beta = -0.1$. 
 For each case we compute the $|\cdot|_{L^1(\Omega)}$ and $|\cdot|_{W^{1,1}(\Omega)}$ errors between the exact solutions $y_0$ and the computed minimisers $y_h$ varying the penatly parameter $\alpha$ while increasing mesh resolution, 
 Fig~\ref{fig_errors}. Here it is evident that the penalty term of eq~\eqref{eq:penalty} captures 
 the homogeneous minimiser for both tension and compression even for small values of the 
 penalty parameter $\alpha$. Instead, for the penalty term of eq~\eqref{eq:penalty2} 
 the error is apparently larger when  $\alpha \le 160$, 
noting that these deformations do not correspond to any minimiser of 
the  continuous problem, because the total  potential energy is smaller than the homogeneous 
 deformation and this cannot hold for any  deformation in $W^{1,p}(\Omega)$. 
To quantify further these errors the ratio between reference and deformed triangle areas  are illustrated 
from the quantity $\det \nabla y$ in the reference configuration (undeformed mesh). 
 Note that in case of uniaxial tension 
 $\det \nabla y_0 = (1 +\beta)=1.1$ is constant. 
  For the penalty \eqref{eq:penalty2}, the 
 penalty parameter should be $\alpha \ge 160$, otherwise $\det \nabla y_h <1$
 for every triangle which is inconsistent with uniaxial tension, see Figs.~\ref{fig:convex_a1_td},
 \ref{fig:convex_a4_td}. Intuitively, one
 can expect these solutions from the fact that $|\nabla y_h|^4$ is minimised when
 $\nabla y_h = \bf{0}$ while for small penalty parameter values discontinuities are allowed implying 
 that 
 the dominant term is $|\nabla y_h|^4$. On the contrary, employing the penalty 
 \eqref{eq:penalty}, even for small values of $\alpha$, e.g. $\alpha=20$, 
 $\nabla y_h$ values are almost constant approaching $\nabla y_0$, see 
 Figs.~\ref{fig:quasiconvex_a1_td} and \ref{fig:exact_td}.
 Under uniaxial compression it is noteworthy that  failure of convergence occurred
 for the penalty \eqref{eq:penalty2} for $\alpha=20$ and $40$. Instead 
 penalty \eqref{eq:penalty} shows a more robust convergence behaviour.

 The above observations are in agreement with our theoretical results, where we have proved that both penalties 
 \eqref{eq:penalty}, \eqref{eq:penalty2} are suitable  for convex energies when the penalty parameter
 $\alpha$ is large enough to ensure coercivity of the discrete energy functional. \\
 
 Furthermore, to confirm the error estimates derived in Section 4
 we consider two problems with smooth minimisers. 
 % ~(\ref{eq:penalty2}), we study the error decay 
 The energy functional is of the form
 \begin{align}
 \min_{\substack{ y \in W^{1,p}(\Omega) \\ y = y_0 \text{ on } \partial \Omega} } \mathcal{E}( y) =  \min_{\substack{ y \in W^{1,p}(\Omega) \\ y = y_0 \text{ on } \partial \Omega} }
\int_\Omega |\nabla y(x)|^p + y(x)\cdot f(x) dx,
\label{eq:error_estimates}
 \end{align}
 where $p=2$ or $p=4$. The vector valued function $f$ is chosen in a way such that the unique minimizer of eq.~(\ref{eq:error_estimates}) is $y_0$ where
 \begin{align}
 y_0(x) = x + (0.1 x_1, 0.1 \sin\left( (x_1 +x_2)\pi \right).
 \label{eq:error_exact}
 \end{align}
We consider piecewise linear finite element spaces. In Table 1 the error $||y_h -y_0||_{W^{1,2}, T_h}$ in the case $p=2$ is displayed, where 
$y_h$ is the computed minimizer for various values of the penalty parameter $\alpha$, while for 
comparison we performed simulations with continuous elements (CG). 
% Note that the 
% theoretical predicted error estimates are confirmed for $h \le 1/64$. For smaller values of
% $h$ the error remains close to $2 \cdot 10^{-3}$, which means the numerical minimization 
% algorithm is potentially stuck in a local minimizer.
We conclude our computational experiments by minimizing  (\ref{eq:error_estimates}) for 
$p=4$ again with a function $f$ such that the unique minimizer is given from  
(\ref{eq:error_exact}).
In both cases the experimental order is 
one, as expected, see Tables 1, 2 and Figure 3. 
% Then, the error decay rate follows a similar behaviour with the case $p=2$, compare Table~\ref{table:p=4} with Table~\ref{table:p=2}.

 \clearpage
 
\begin{figure}%[H]
\centering
 \subfloat[Tension: $W = |F|^4$]{\includegraphics[width=0.5\linewidth]{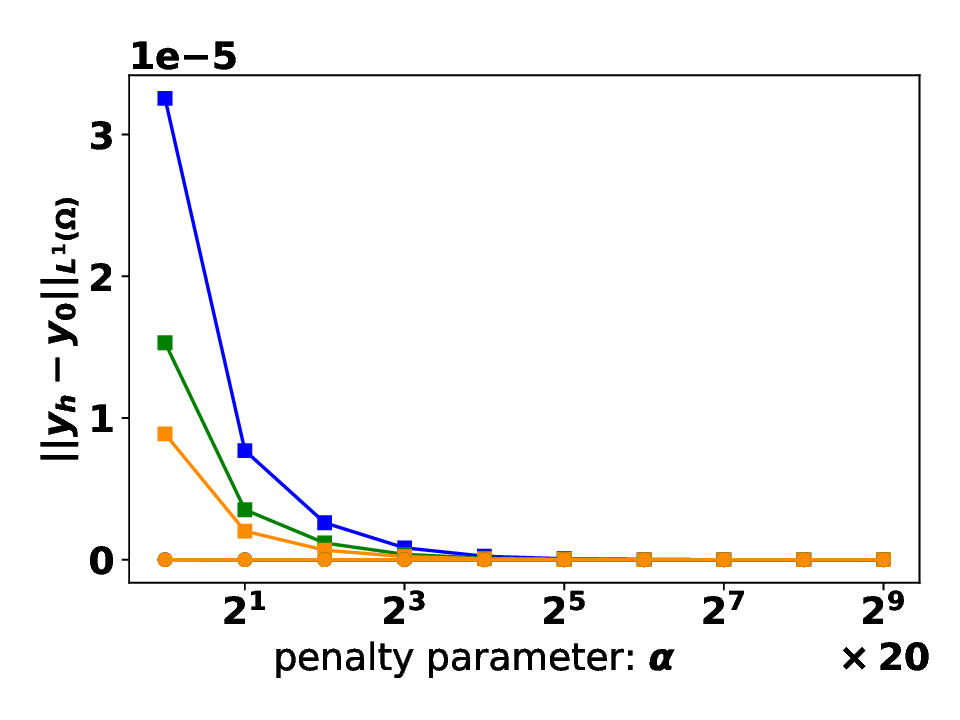}
 \label{fig:fn1a} 
 }
\subfloat[Tension: $W = |F|^4$]{ \includegraphics[width=0.5\linewidth]{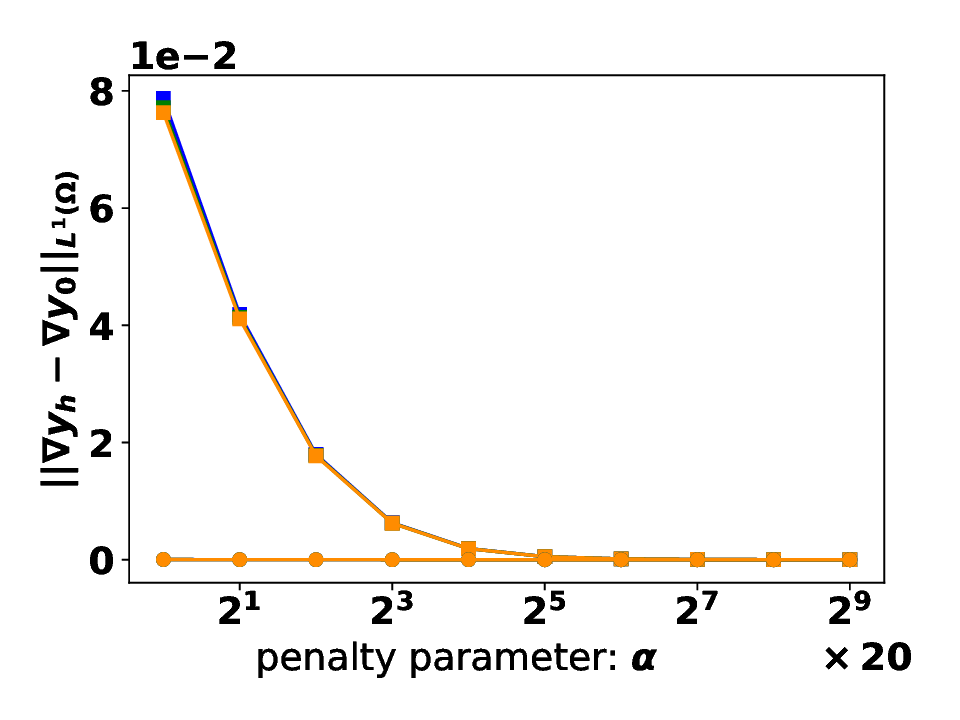}
 \label{fig:fn1b} }
  \\
  \subfloat[Compression:$W = |F|^6$]{ \includegraphics[width=0.5\linewidth]{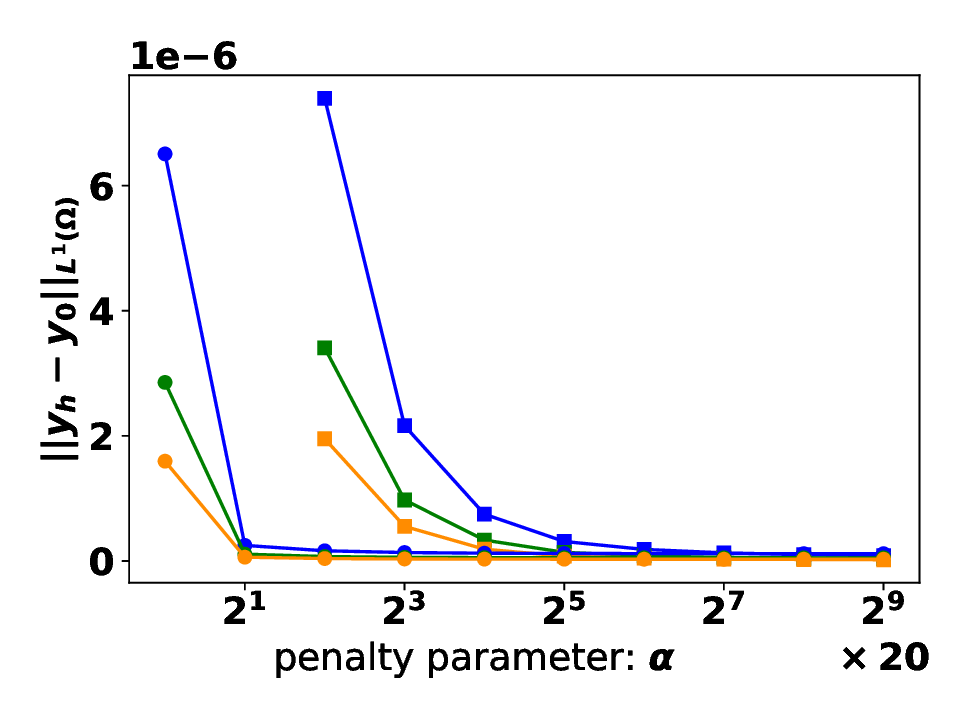}
  \label{fig:fn1c} }
 \subfloat[Compression:$W = |F|^6$]{ \includegraphics[width=0.5\linewidth]{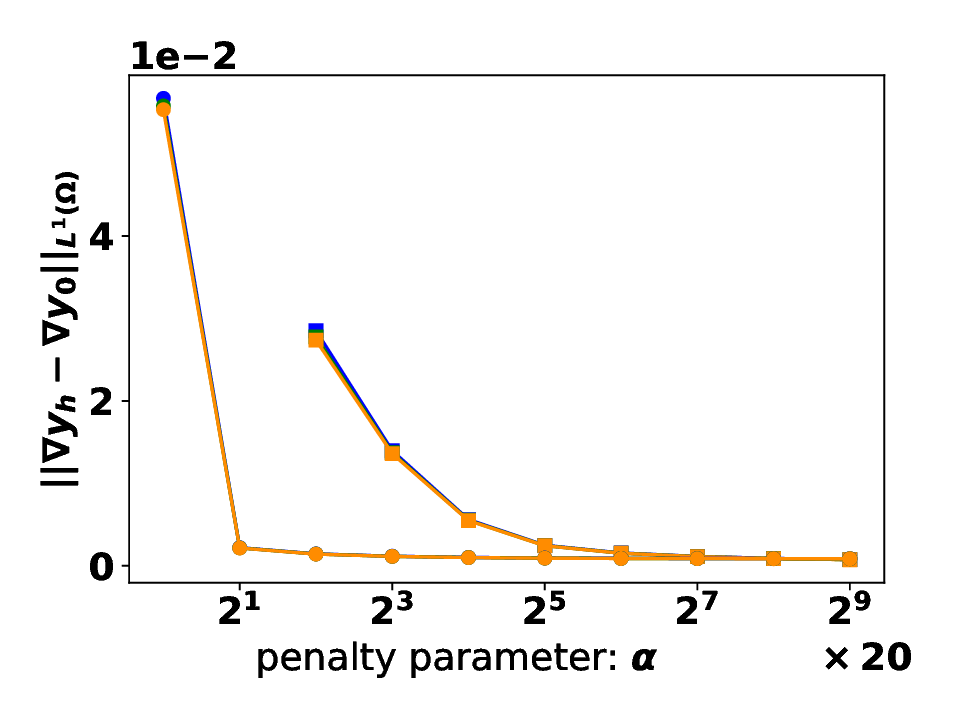}
 \label{fig:fn1d} 
  }
  \caption{
  Comparing the accuracy  of the two proposed stabilisation penalty terms \eqref{eq:penalty} (circles) and 
  \eqref{eq:penalty2} (squares) for the convex strain energies 
  $W(F) = |F|^4$ and $W(F) = |F|^6$.
  For the former energy the material is subjected to 10\% uniaxial tension, 
  while for the latter to 10\%   uniaxial compression. 
  Horizontal axis: values of the penalty parameter $\alpha$. 
  Vertical axes: $|u_h - u_e|_{L^1(\Omega)}$ and  $|u_h - u_e|_{W^{1,1}(\Omega)}$ errors 
  for various mesh resolutions, specifically for 1024 (blue), 2034 (green) and 4096 (orange) triangles,
  where   $u_h$ denotes the numerical solutions and $u_e$ the exact homogeneous minimizers. 
  In \protect\subref{fig:fn1a} and 
  \protect\subref{fig:fn1c} circular error belong in the range of 
  $10^{-8}$ and $10^{-9}$, while
  in \protect\subref{fig:fn1b} and  \protect\subref{fig:fn1d} circular errors are of the order 
  $10^{-6}$, which is determined by the numerical minimization criterion, i.e. stop iterations when 
  $\sup_{K \in T_h}\norm{\nabla u_h}_{L^{\infty}(K)} < 10^{-5}.$ In \protect\subref{fig:fn1c} and 
  \protect\subref{fig:fn1d} the scheme employing the penalty \eqref{eq:penalty2} did not converge 
  for the values $\alpha =20$ and  $40$.
  } 
  \label{fig_errors}
\end{figure}

\begin{figure}%[tbhp]
\centering
% \hspace{-0.13cm} 
 \subfloat[]{\hspace{-0.75cm} \includegraphics[width=0.55\linewidth]{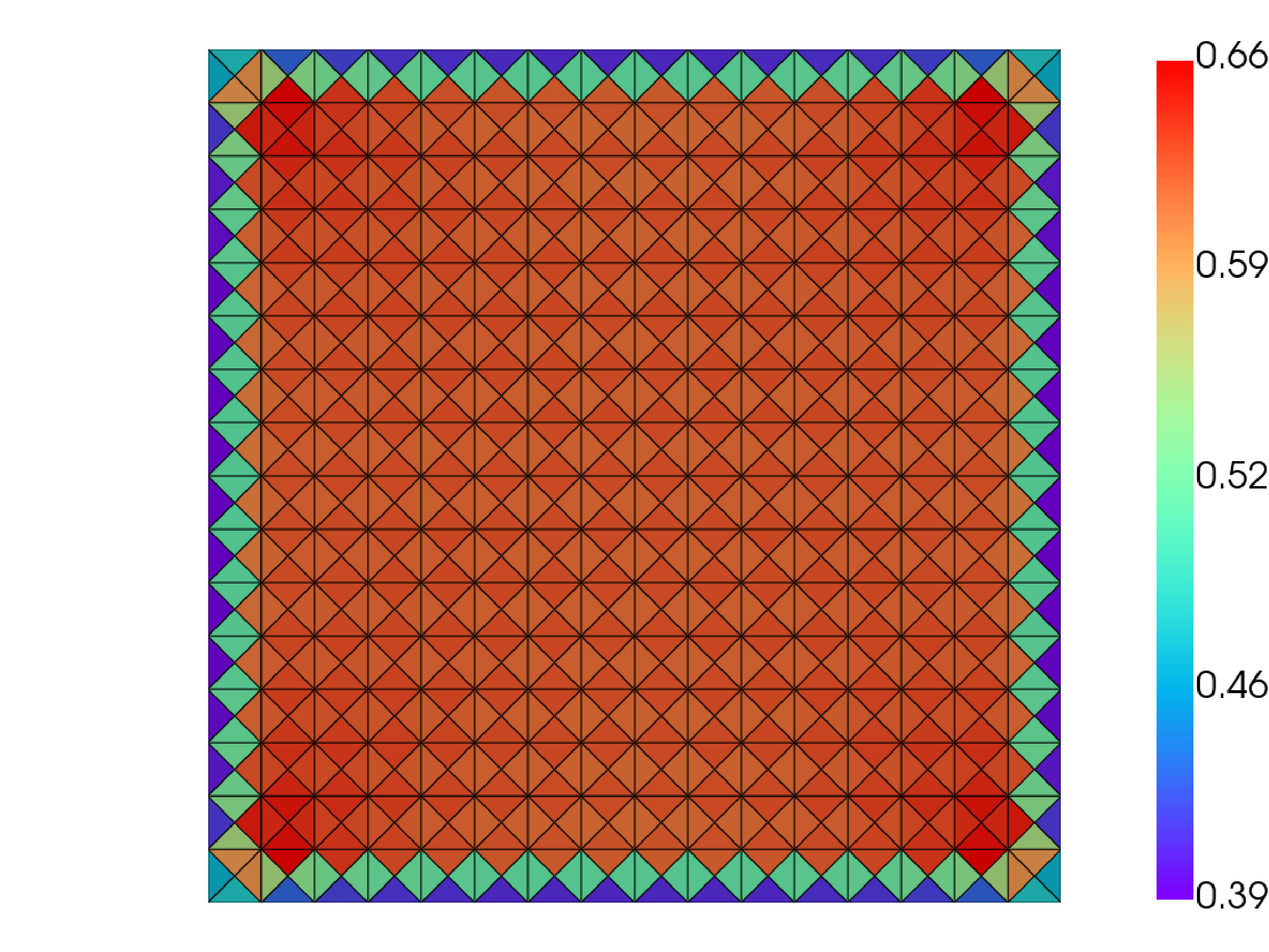} 
\label{fig:convex_a1_td} }
\subfloat[]{\hspace{-0.35cm} \includegraphics[width=0.55\linewidth]{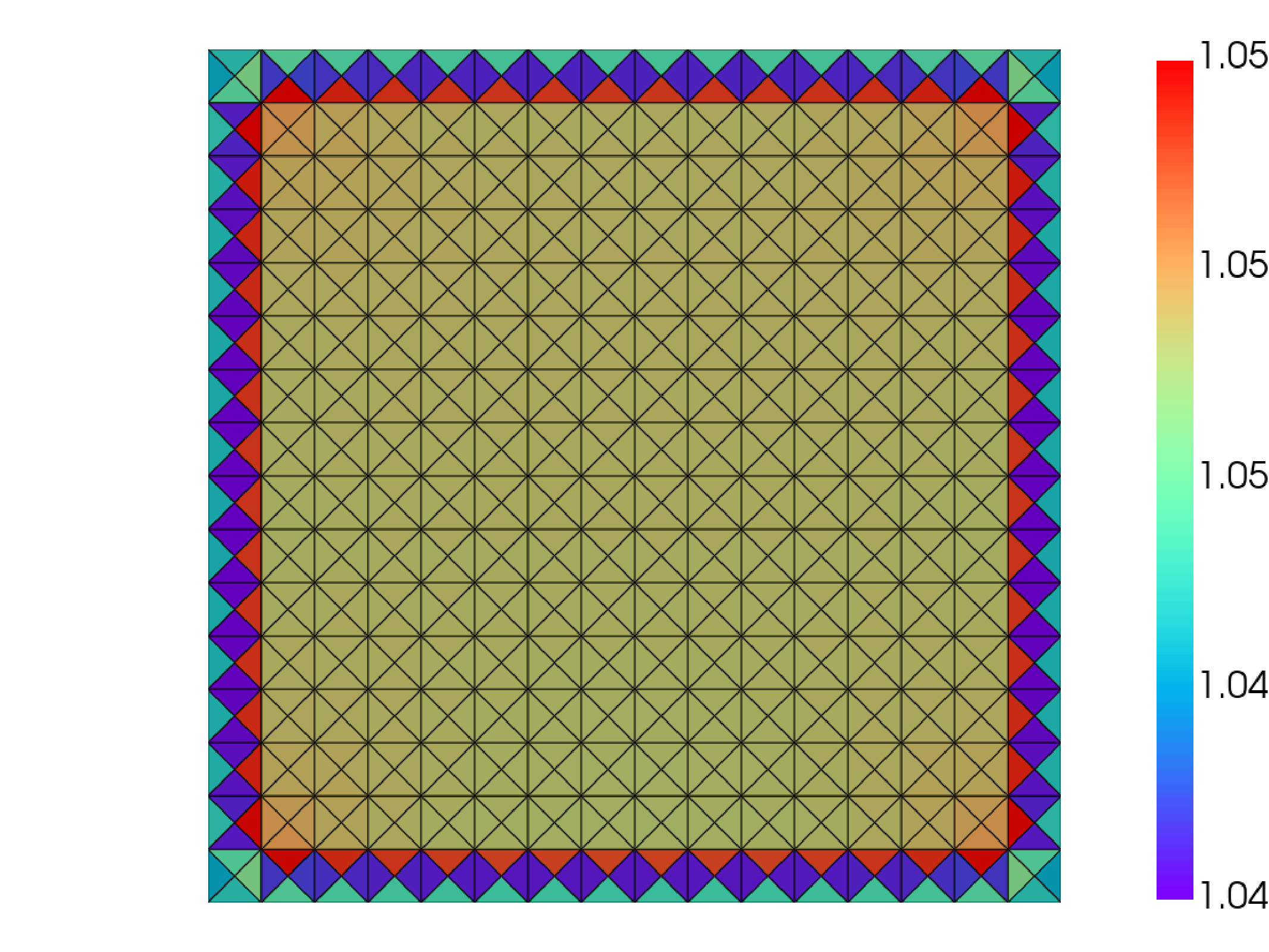}
\label{fig:convex_a4_td} }
 \\
\subfloat[]{\hspace{-0.7cm}\includegraphics[width=0.55\linewidth]{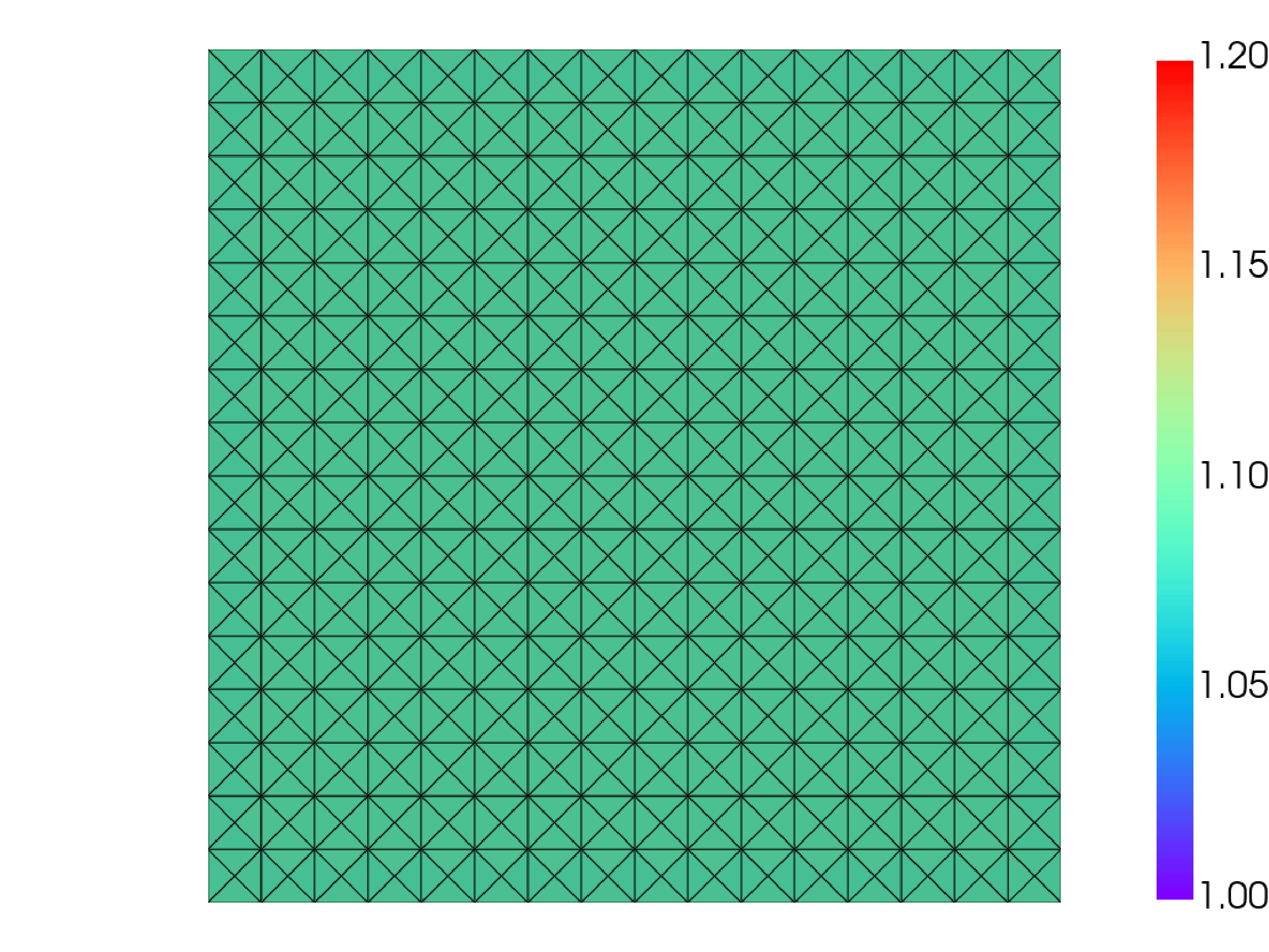} 
\label{fig:quasiconvex_a1_td} }
% \hspace{0.01cm}
 \subfloat[]{\hspace{-0.2cm}\includegraphics[width=0.55\linewidth]{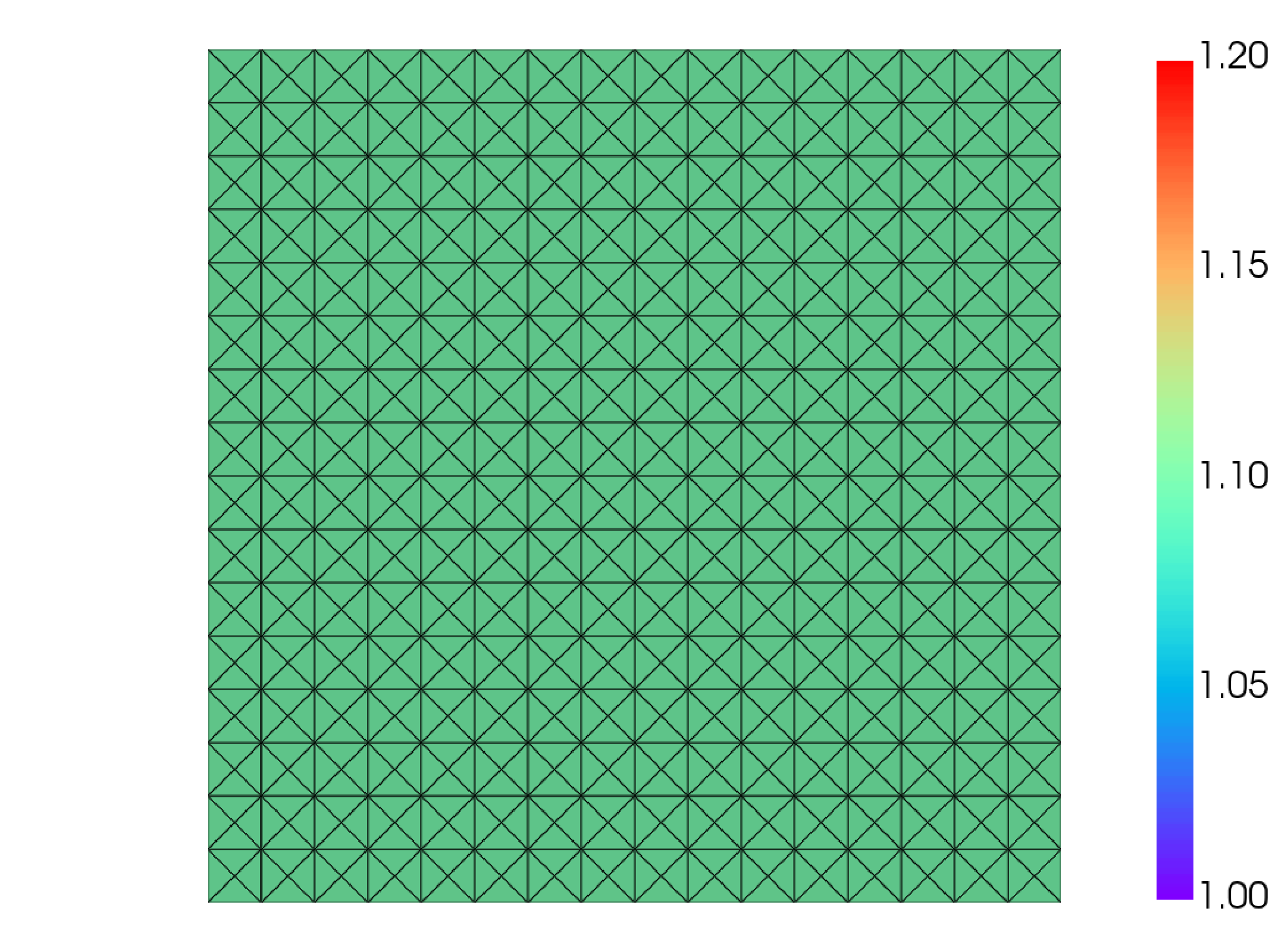} 
\label{fig:exact_td} }
 \caption{ Reference to deformed triangle area ($\det \nabla{y_h}$) in the reference configuration under uniaxial tension (10\% strain). 
 Employing the penalty of eq.~\eqref{eq:penalty2}, $\det \nabla{y_h}$ for the computed minimizers is illustrated: \protect\subref{fig:convex_a1_td} when $\alpha=20$ and 
\protect\subref{fig:convex_a4_td} when $\alpha =160$ (see blue squares of Figs.~\ref{fig:fn1a} and \ref{fig:fn1b} at $\alpha =20, 160$).
\protect\subref{fig:quasiconvex_a1_td}: Area ratio for the penalty of eq.\eqref{eq:penalty} with $\alpha=20$.
\protect\subref{fig:exact_td} $\det \nabla{I_h y_0}$, 
$I_h:W^{1,p}(\Omega) \rightarrow V_h$, the standard interpolation operator. 
}
 \centering
 \label{fig:tension_detF}
\end{figure}

\begin{figure}[h!]
\centering
 %\subfloat[p=2]
 {\includegraphics[width=0.90\linewidth]{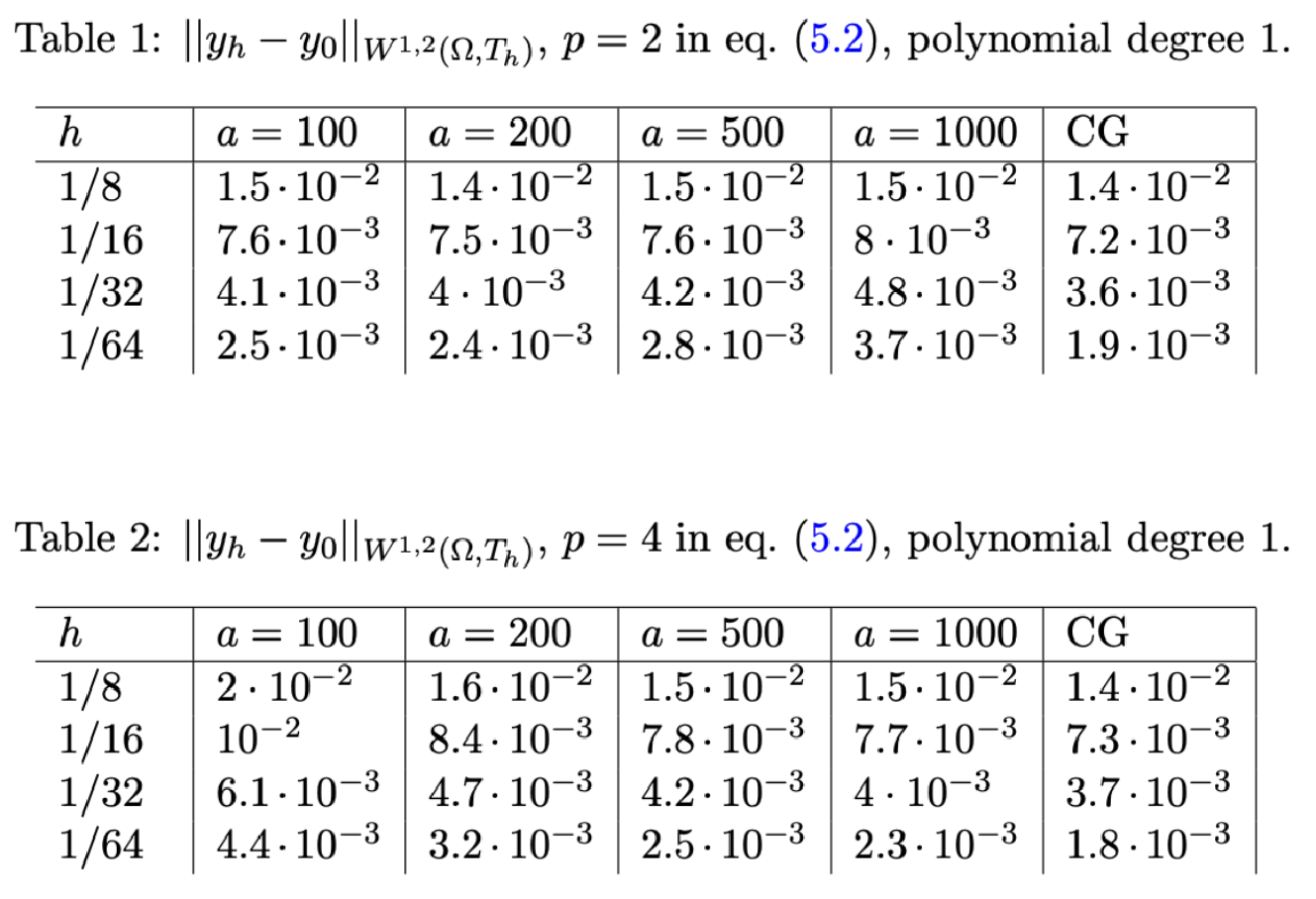}
 \label{fig:p2tablea} 
 }
  % \nocaption
  \label{fig_table_errorsa}
\end{figure}

\begin{figure}[h!]
\centering
 \subfloat[p=2]{\includegraphics[width=0.55\linewidth]{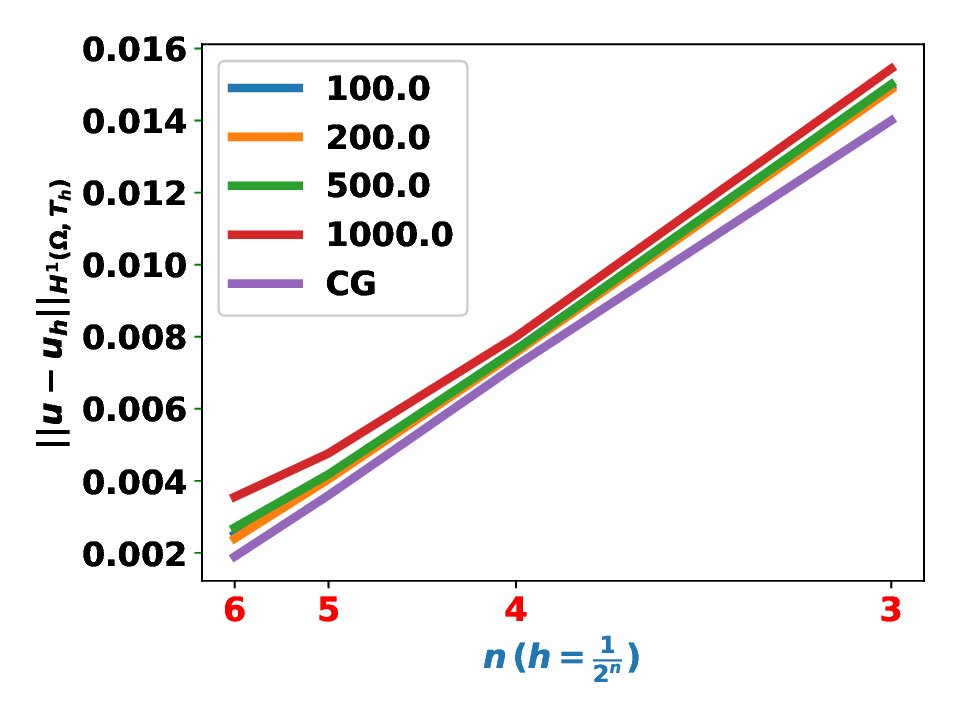}
 \label{fig:p2tableb} 
 }
\subfloat[p=4]{ \includegraphics[width=0.55\linewidth]{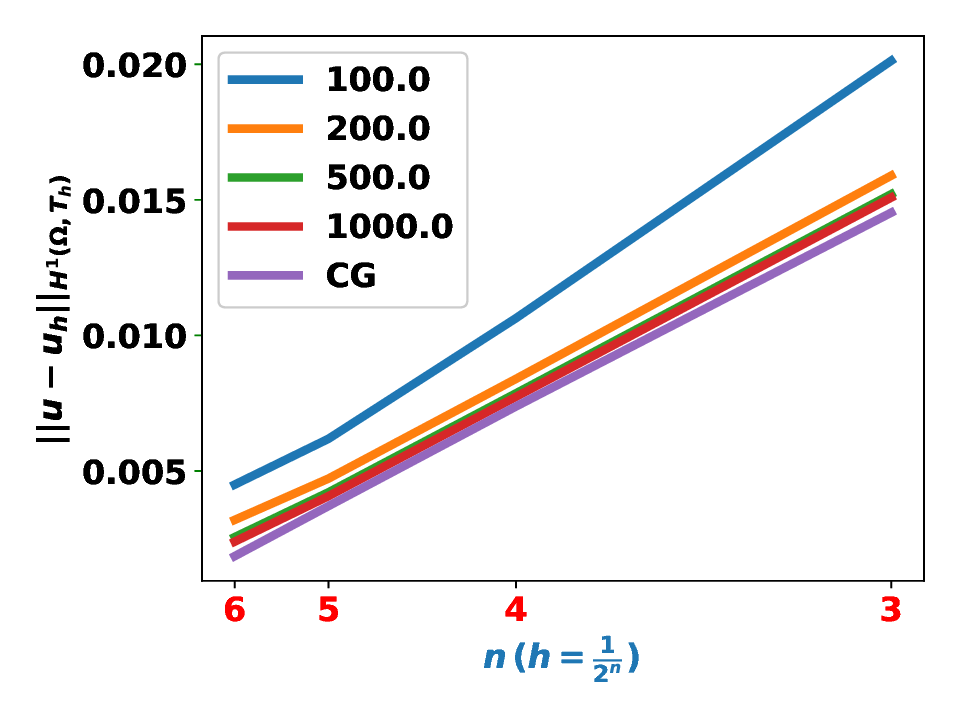}
 \label{fig:p2tablec} }
  \caption{Convergence behaviour in $W^{1,2}(\Omega,T_h)$ of the error over discretization parameter $h.$  Table 1 and plot (a) correspond to $p=2$ and 
  Table 2 and plot (b) correspond to $p=4$
  in (\ref{eq:error_estimates}) for various values of the penalty parameter $a.$
  } 
  \label{fig_table_errorsb}
\end{figure}

\section*{Acknowledgements} The research of A.V. was funded in whole by the Austrian Science Fund (FWF) projects I4354 and I5149. The research of K.K. is partially funded by EPSRC, United Kingdom EP/X038998/1 grant and receives support from the Dr Perry James (Jim) Browne Research Centre, University of Sussex.

\clearpage
\bibliographystyle{alpha}
\bibliography{references.bib}

\newcommand{\etalchar}[1]{$^{#1}$}
\begin{thebibliography}{GKMR22}

\bibitem[ABH{\etalchar{+}}15]{alnaes2015fenics}
Martin Aln{\ae}s, Jan Blechta, Johan Hake, August Johansson, Benjamin Kehlet,
  Anders Logg, Chris Richardson, Johannes Ring, Marie~E Rognes, and Garth~N
  Wells.
\newblock The fenics project version 1.5.
\newblock {\em Archive of numerical software}, 3(100), 2015.

\bibitem[AFP00]{ambrosio2000functions}
Luigi Ambrosio, Nicola Fusco, and Diego Pallara.
\newblock {\em Functions of bounded variation and free discontinuity problems}.
\newblock Oxford university press, 2000.

\bibitem[Arn82]{Arnold_DG}
Douglas~N. Arnold.
\newblock An interior penalty finite element method with discontinuous
  elements.
\newblock {\em SIAM Journal on Numerical Analysis}, 19(4):742--760, 1982.

\bibitem[BBN17]{bartels2017bilayer}
S{\"o}ren Bartels, Andrea Bonito, and Ricardo~H Nochetto.
\newblock Bilayer plates: Model reduction, {$\Gamma$}-convergent finite element
  approximation, and discrete gradient flow.
\newblock {\em Communications on Pure and Applied Mathematics}, 70(3):547--589,
  2017.

\bibitem[BNN21]{BNNtogkas:2021}
Andrea Bonito, Ricardo~H. Nochetto, and Dimitrios Ntogkas.
\newblock D{G} approach to large bending plate deformations with isometry
  constraint.
\newblock {\em Math. Models Methods Appl. Sci.}, 31(1):133--175, 2021.

\bibitem[BO09]{buffa2009compact}
Annalisa Buffa and Christoph Ortner.
\newblock Compact embeddings of broken sobolev spaces and applications.
\newblock {\em IMA journal of numerical analysis}, 29(4):827--855, 2009.

\bibitem[BS07]{brenner2007mathematical}
Susanne Brenner and Ridgway Scott.
\newblock {\em The mathematical theory of finite element methods}, volume~15.
\newblock Springer Science \& Business Media, 2007.

\bibitem[Dac07]{dacorogna2007direct}
Bernard Dacorogna.
\newblock {\em Direct methods in the calculus of variations}, volume~78.
\newblock Springer Science \& Business Media, 2007.

\bibitem[DH85]{DafHrusa_1985}
Constantine~M. Dafermos and William~J. Hrusa.
\newblock Energy methods for quasilinear hyperbolic initial-boundary value
  problems. {A}pplications to elastodynamics.
\newblock {\em Arch. Rational Mech. Anal.}, 87(3):267--292, 1985.

\bibitem[DPE10]{di2010discrete}
Daniele Di~Pietro and Alexandre Ern.
\newblock Discrete functional analysis tools for discontinuous galerkin methods
  with application to the incompressible navier-stokes equations.
\newblock {\em Mathematics of Computation}, 79(271):1303--1330, 2010.

\bibitem[GKMR22]{grekas2022approximations}
Georgios Grekas, Konstantinos Koumatos, Charalambos Makridakis, and Phoebus
  Rosakis.
\newblock Approximations of energy minimization in cell-induced phase
  transitions of fibrous biomaterials: {$\Gamma$}-convergence analysis.
\newblock {\em SIAM Journal on Numerical Analysis}, 60(2):715--750, 2022.

\bibitem[GP99]{gobb_prohl_1999}
Matthias~K Gobbert and Andreas Prohl.
\newblock A discontinuous finite element method for solving a multiwell
  problem.
\newblock {\em SIAM journal on numerical analysis}, 37(1):246--268, 1999.

\bibitem[KLST20]{koumatos2020existence}
Konstantinos Koumatos, Corrado Lattanzio, Stefano Spirito, and Athanasios~E
  Tzavaras.
\newblock Existence and uniqueness for a viscoelastic {K}elvin-{V}oigt model
  with nonconvex stored energy.
\newblock {\em arXiv preprint arXiv:2012.10344}, 2020.

\bibitem[KP03]{karakashian2003posteriori}
Ohannes~A Karakashian and Frederic Pascal.
\newblock A posteriori error estimates for a discontinuous {G}alerkin
  approximation of second-order elliptic problems.
\newblock {\em SIAM Journal on Numerical Analysis}, 41(6):2374--2399, 2003.

\bibitem[KV21]{Koumatos_Vikelis_2021}
Konstantinos Koumatos and Andreas~P. Vikelis.
\newblock A-quasiconvexity, {G}årding inequalities, and applications in {PDE}
  constrained problems in dynamics and statics.
\newblock {\em SIAM Journal on Mathematical Analysis}, 53(4):4178--4211, 2021.

\bibitem[Mak93]{makr_1993}
Charalambos~G. Makridakis.
\newblock Finite element approximations of nonlinear elastic waves.
\newblock {\em Math. Comp.}, 61(204):569--594, 1993.

\bibitem[MIB96]{MakrIB_1996}
Ch. Makridakis, F.~Ihlenburg, and I.~Babu\v{s}ka.
\newblock Analysis and finite element methods for a fluid-solid interaction
  problem in one dimension.
\newblock {\em Math. Models Methods Appl. Sci.}, 6(8):1119--1141, 1996.

\bibitem[MMR14]{makridakis2014atomistic}
Charalambos Makridakis, Dimitrios Mitsoudis, and Phoebus Rosakis.
\newblock On atomistic-to-continuum couplings without ghost forces in three
  dimensions.
\newblock {\em Applied Mathematics Research eXpress}, 2014(1):87--113, 2014.

\bibitem[OS07]{ortner2007discontinuous}
Christoph Ortner and Endre S{\"u}li.
\newblock Discontinuous {G}alerkin finite element approximation of nonlinear
  second-order elliptic and hyperbolic systems.
\newblock {\em SIAM Journal on Numerical Analysis}, 45(4):1370--1397, 2007.

\bibitem[Sch74]{Schatz_1974}
Alfred~H. Schatz.
\newblock An observation concerning {R}itz-{G}alerkin methods with indefinite
  bilinear forms.
\newblock {\em Math. Comp.}, 28:959--962, 1974.

\bibitem[TEL06]{ten2006discontinuous}
Alex Ten~Eyck and Adrian Lew.
\newblock Discontinuous {G}alerkin methods for non-linear elasticity.
\newblock {\em International Journal for Numerical Methods in Engineering},
  67(9):1204--1243, 2006.

\bibitem[Zie89]{ziemer1989weakly}
William~P Ziemer.
\newblock Weakly differentiable functions, sobolev spaces and functions of
  bounded variation.
\newblock {\em Grad. Texts in Math.}, 120, 1989.

\end{thebibliography}

\end{document}